\documentclass{article}

\usepackage[utf8]{inputenc}
\usepackage[english]{babel}
\usepackage{amssymb,amsthm,amsmath,amsfonts,mathrsfs}
\usepackage[table]{xcolor}
\usepackage{tikz}
\usepackage{hyperref}
\usepackage{xfrac}
\usepackage{wrapfig}
\usepackage[normalem]{ulem}
\usepackage[capitalise, noabbrev]{cleveref}
	\crefname{subsection}{Subsection}{Subsections}
	\Crefname{subsection}{Subsection}{Subsections}

\newtheorem{theorem}{Theorem}[section]

\newtheorem{lemma}[theorem]{Lemma}

\newtheorem{corollary}[theorem]{Corollary}

\theoremstyle{definition}
\newtheorem{definition}[theorem]{Definition}
\newtheorem{example}[theorem]{Example}

\theoremstyle{remark}

\newtheorem{observation}[theorem]{Observation}


\renewcommand{\P}{\mathcal{P}}

\newcommand{\GL}{{G^\mathcal{L}}}
\newcommand{\GR}{{G^\mathcal{R}}}
\newcommand{\HL}{H^\mathcal{L}}
\newcommand{\HR}{H^\mathcal{R}}

\tolerance=1
\emergencystretch=\maxdimen
\hyphenpenalty=10000
\hbadness=10000
\begin{document}

\title{ Ordinal Sums, {\sc clockwise hackenbush}, and {\sc domino shave}}
\maketitle

\begin{center}
  {\sc Alda Carvalho}\textsuperscript{1},
  {\sc Melissa A. Huggan}\textsuperscript{2},
    {\sc Richard J. Nowakowski}\textsuperscript{3},\\
      {\sc Carlos Pereira dos Santos}\textsuperscript{4}
  \par \bigskip

  \textsuperscript{1}ISEL--IPL \& CEMAPRE--University of Lisbon, \href{mailto:acarvalho@adm.isel.pt}{acarvalho@adm.isel.pt} \par
  \textsuperscript{2}Ryerson University, \href{mailto:Melissa.Huggan@ryerson.ca}{melissa.huggan@ryerson.ca} \par
          \textsuperscript{3}Dalhousie University, \href{mailto:r.nowakowski@dal.ca}{r.nowakowski@dal.ca} \par
    \textsuperscript{4}Center for Functional Analysis, Linear Structures and Applications, University of Lisbon \& ISEL--IPL, \href{mailto:cmfsantos@fc.ul.pt}{cmfsantos@fc.ul.pt}

   \bigskip
Dedicated to Elwyn R. Berlekamp, John H. Conway and Richard K. Guy, they taught us so much.
\end{center}

\begin{abstract}
\noindent
We present two rulesets, \textsc{domino shave} and \textsc{clockwise hackenbush}. The first is somehow natural and, as
special cases,  includes \textsc{stirling shave} and Hetyei's Bernoulli game. \textsc{Clockwise hackenbush} seems artificial yet it is equivalent to \textsc{domino shave}. From the pictorial form of the game, and a knowledge of \textsc{hackenbush}, the decomposition into ordinal sums is immediate.  The values of \textsc{clockwise blue-red hackenbush} are numbers and we provide an explicit formula for the ordinal sum of numbers where the literal form of the base is $\{x\,|\,\}$ or $\{\,|\,x\}$, and $x$ is a number. That formula generalizes van Roode's signed binary number method for {\sc blue-red hackenbush}.
\end{abstract}

\noindent
{\sc Keywords}: Combinatorial Game Theory, {\sc hackenbush}, van Roode's method, ordinal sum.
\section{Introduction}
\textsc{Hackenbush} is a central game in \textit{Winning Ways} \cite{BCG}. It has many interesting properties. One that will be central to this paper is the relationship between the ordinal sum decomposition and the valuation scheme for paths and trees. The literature also includes variants with new intriguing properties in new contexts. For example, \textsc{yellow-brown hackenbush} \cite{Berle2009} and all-small games;  \textsc{hackenbush sprigs} \cite{McKayMN2016} and mis\`ere games; and  \textsc{toppling dominoes} \cite{Fink} and hot games.
 
 In this paper, we introduce two rulesets, \textsc{clockwise hackenbush} and \textsc{domino shave}.
 The first is
 a new variant of
 \textsc{hackenbush} trees and the second is the partizan version of \textsc{stirling shave}.

 We first provide a complete solution for \textsc{clockwise blue-red hackenbush}.
 As in  \textsc{blue-red hackenbush} trees, the best moves are the ones highest up the tree, see \cref{lem:numbers}. We then give a method for calculating the value of a position.  This is accomplished by
 giving a decomposition theorem in term of ordinal sums (\cref{thm:OSDecomp}).  In  \cref{roode}
 explicit formulas are given for  the ordinal sum of numbers when the base is a {\sc blue-red hackenbush} string or when the base is in canonical form \cite{Roode}. In contrast, the evaluation of a tree in \textsc{blue-red hackenbush} involves iterating ordinal sums via signed binary numbers and disjunctive sums.

One of the main contributions of this paper is \cref{main2}, which gives the formula for the ordinal sum of numbers where the literal form of the base is $\{x\,|\,\}$ or $\{\,|\,x\}$, and $x$ is a number.

Whereas \textsc{clockwise hackenbush} may seem a little artificial, \textsc{domino shave} seems natural. It is
  a partizan version of \textsc{stirling shave} \cite{Fisher} which, in turn, was suggested by Hetyei's Bernoulli game \cite{Hetye2009,Hetye2010}.    The main result in \cref{sec:DominoShave} is that
  \textsc{clockwise hackenbush} and \textsc{domino shave} are equivalent games. Moreover,
  a position in one can be easily transformed to a position in the other. As an interesting sidelight, we also show that
Hetyei's Bernoulli game  is an instance of \textsc{stirling shave} thereby giving the first complete analysis of the game.

\subsection{The rules of the games}

A {\sc clockwise  hackenbush} position is a  \emph{tree} with blue, red, and green edges, which are connected to the ground. The rightmost edges form the \textit{trunk}, and the players can only remove edges from the trunk. There are two players, Left and Right. On Left's turn, she may remove a blue or green edge from the trunk. On Right's turn, he may remove a red or green edge from the trunk. Afterward, any edge not connected to the ground is also removed.

 We draw the trunk vertically. \cref{fig:trunk1} and \cref{fig:trunk2} show two {\sc clockwise blue-red hackenbush} positions and their options. Note that as play progresses, a branch that was not on the trunk can become part of the trunk. See the first Left option in \cref{fig:trunk1} and \cref{fig:trunk2}. Different drawings of the same tree will result in different trunks and therefore in different
  \textsc{clockwise hackenbush} positions.

\begin{figure}[hbt]
\begin{center}
\includegraphics[width=0.8\linewidth]{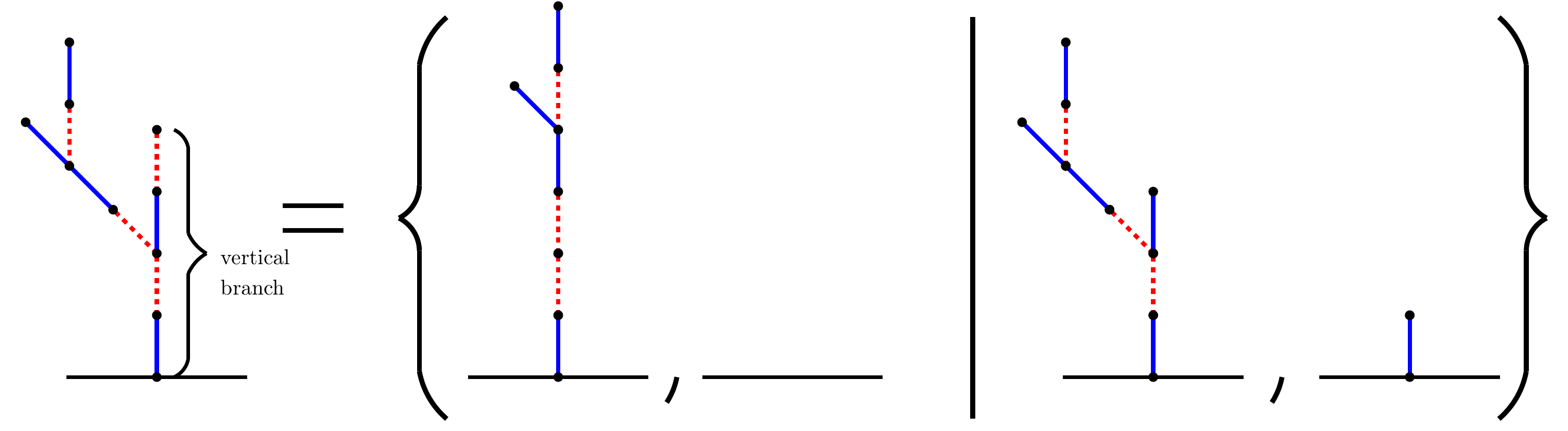}
\caption{A \textsc{clockwise blue-red hackenbush} position.}\label{fig:trunk1}
\end{center}
\end{figure}

\begin{figure}[hbt]
\begin{center}
\includegraphics[width=0.8\linewidth]{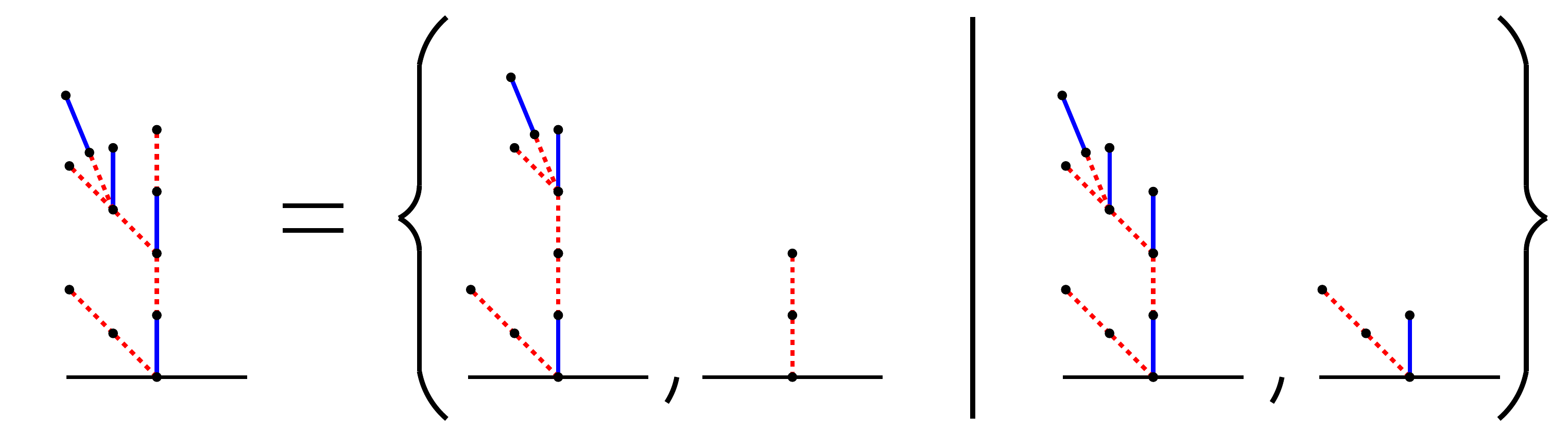}
\end{center}
\caption{A second \textsc{clockwise blue-red hackenbush} position.}\label{fig:trunk2}
\end{figure}

 \textsc{Domino shave}, not surprisingly, involves dominoes. For us, a \textit{domino} is an ordered pair of non-negative integers,
 written $d=(l,r)$. We will distinguish the numbers: $l$ is the \textit{left spot} and $r$ is the \textit{right spot}. A line of $k$ dominoes will be described as $d_1,d_2,\ldots,d_k$ or as $(l_1,r_1),(l_2,r_2),\ldots,(l_k,r_k)$, $0\leqslant l_i, r_i$. A domino is  \textit{blue}  if $l_i<r_i$, it is  \textit{red}  if $l_i>r_i$, and  \textit{green} if $l_i=r_i$.

A {\sc domino shave} position is a line of dominoes. The two players take turns making moves. On Left's move, she may remove a green or blue domino $d_i$ and all the others with greater index  leaving $d_1,d_2,\ldots, d_{i-1}$, provided that, for all $j\geqslant i$, $l_i\leqslant  l_j$ and $l_i\leqslant r_j$. On Right's move, he may remove a green or red domino $d_i$ leaving $d_1,d_2,\ldots, d_{i-1}$, provided that, for all $j\geqslant i$, both $l_j\geqslant r_i$ and $r_j\geqslant r_i$ hold. See Figure~\ref{fig: domino} for an example of a {\sc domino shave}  position and its options.\\

\begin{figure}[hbt]
\begin{center}
\includegraphics[width=0.9\linewidth]{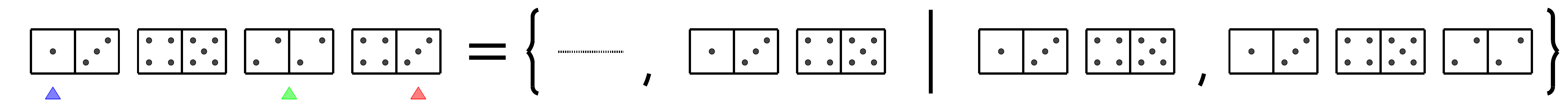}
\caption{Example of a \textsc{domino shave} position.}\label{fig: domino}
\end{center}
\end{figure}

For this paper, normal play is the winning convention.  Readers can consult any edition of \textit{Winning Ways}~\cite{BCG}, specifically the sections on \textsc{hackenbush}, to gain further insight. We assume general knowledge about normal play but, in order to keep the material self-contained, we clarify some ideas about the concepts of ordinal sum and, also, the particular case of ordinal sums of \linebreak{\sc blue-red hackenbush} strings.

\subsection{Ordinal sum}

In a {\sc blue-red hackenbush} string, if a player moves on the bottom, then the top disappears; if a player moves on the top, then nothing happens to the bottom. This idea motivates the concept of the ordinal sum. In the ordinal sum of two games $G:H$, a player may move in either $G$ (base) or $H$ (subordinate), with the additional constraint that any move on $G$ completely annihilates the component $H$. The recursive definition is $$G:H=\left\{G^\mathcal{L},G:H^\mathcal{L}\,|\,G^\mathcal{R},G:H^\mathcal{R}\right\}.$$
The Colon principle states that the form of the base matters, but not the form of the subordinate. Formally, \\

\noindent \textit{Colon Principle} \cite{BCG}: If $H\geqslant H'$, then $G:H\geqslant G:H'$.\\

 Note that, while it is true that $H=H'$ implies $G:H=G:H'$, it is \textit{not} true that $G=G'$ implies $G:H=G':H$. For example, $G=\{0\,|\,2\}$ and $G'=\{0\,|\,\}$ are different forms with game value $1$, and we have $G:1=1\frac{1}{2}$ and $G':1=2$.\\

In fact, we can be more precise about the role of the base of an ordinal sum. The following theorem shows that the problem only happens if the literal form of the base has reversible options. If it has no reversible options, we can replace the literal form of the base by its canonical form without changing the game value.

\begin{theorem}[McKay's Theorem]\label{th:mk} If $G$ has no reversible options and $K$ is the canonical form of $G$, then $G:H=K:H$.
\end{theorem}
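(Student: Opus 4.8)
The plan is to prove the identity $G:H=K:H$ by a single induction on the form of the subordinate $H$, after first isolating the two elementary transformations of the base that are permitted. Recall that the canonical form $K$ of $G$ is produced by three operations applied recursively: replacing each option by its own canonical form, deleting dominated options, and bypassing reversible options. The hypothesis that $G$ has no reversible options is precisely what removes the third operation at the top level, so the strategy is to show (i) that the first two operations never change the value of an ordinal sum, and (ii) that, under the hypothesis, $K$ is reachable from $G$ using only those two operations.

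For (i) I would prove two lemmas, each by induction on the birthday of $H$ using the recursion $G:H=\{\GL,\,G:\HL\mid\GR,\,G:\HR\}$. First, if $\widehat G$ is obtained from $G$ by replacing a single option by an equal-valued game, then $G:H=\widehat G:H$; indeed, in the fully expanded game tree of $G:H$ the options of the base occur only as standalone positions (a move in the base annihilates the subordinate), so only their values matter, and the inductive hypothesis identifies the terms $G:\HL$ with $\widehat G:\HL$ and $G:\HR$ with $\widehat G:\HR$. Second, if $\widehat G$ is obtained from $G$ by deleting a dominated option, then again $G:H=\widehat G:H$, because deleting a dominated standalone option leaves the value of an option set unchanged and the inductive hypothesis handles the subordinate terms as before. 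Neither lemma uses the hypothesis on $G$; this makes transparent that the sole function of ``no reversible options'' is to forbid the one dangerous operation, bypassing, whose effect is exactly the discrepancy exhibited by $\{0\mid2\}:1=1\tfrac{1}{2}\neq2=\{0\mid\}:1$.

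Assembling these, I would pass from $G$ to $K$ by first applying the substitution lemma to replace every option of $G$ by its canonical form, and then applying the deletion lemma to remove the now-dominated options; what remains is a form whose options are canonical and undominated, and the claim is that it is already $K$. The one point that must be checked — and the main obstacle — is that these simplifications of the options do not create a reversible option at the top level, for a newly created reversible option would have to be bypassed and that step is not value-preserving. Concretely, I must show that non-reversibility is inherited: if no Right option of a Left option $\GL$ satisfies $(\GL)^\R\leqslant G$, then no Right option of the canonical form of $\GL$ is $\leqslant G$ either. I expect to obtain this from the sub-lemma that every Right option of the canonical form of a game is $\geqslant$ some Right option of the original form (and dually on the left), proved by induction on birthday through the three simplification steps; granting this, any offending Right option $s\leqslant G$ of the simplified option would dominate some original $(\GL)^\R\leqslant s\leqslant G$, contradicting non-reversibility. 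With inheritance in hand, the reduced form has canonical, undominated, and irreversible options, hence equals $K$, and the two lemmas give $G:H=K:H$.
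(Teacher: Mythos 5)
The paper offers no internal proof to compare yours against: its ``proof'' of this theorem is a citation to page~42 of McKay's thesis. Judged on its own merits, your architecture is sound. The substitution and deletion lemmas are correct and go through by exactly the induction on $H$ you describe; applying them passes from $G$ to a form of the same value whose options are canonical and undominated; and you correctly isolate the crux, namely that this form must still have no reversible options, so that uniqueness of canonical form identifies it with $K$. Your reduction of that crux to the matching sub-lemma (every Right option of the canonical form of $X$ is $\geqslant$ some Right option of $X$, and dually) is also the right move.

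The gap is your proposed proof of that sub-lemma ``by induction on birthday through the three simplification steps'': the bypassing step does \emph{not} preserve the matching property, so the one-step claims cannot be composed. Concretely, write $*=\{0\mid 0\}$ and take
\[
X=\{-1\mid A\},\qquad A=\bigl\{\{1\mid *\}\mid 2\bigr\}.
\]
Then $A=1$ and $X=0$, and the Right option $A$ of $X$ is reversible through $A^{L}=\{1\mid *\}\geqslant 0=X$. Bypassing it replaces $A$ by the Right options of $A^{L}$, producing $X'=\{-1\mid *\}$; the new Right option $*$ is not $\geqslant$ any Right option of $X$, since the only one, $A$, has value $1$ and $*\not\geqslant 1$. (The failure is not an artifact you can legislate away: the sub-lemma must be applied to the options $G^{L}$, $G^{R}$ of $G$, and the hypothesis of the theorem says nothing about reversibility \emph{inside} those options, so bypassing genuinely occurs in their simplification. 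Note also that the offending new option is itself reversible --- here $*$ reverses through $0\geqslant X'$ --- which is precisely why the matching property holds for the final canonical form while failing for intermediate forms; a step-by-step induction cannot see this.) The sub-lemma itself is true, and the repair is the standard direct argument: let $K$ be the canonical form of $X$ and $K^{R}$ any Right option of $K$. Since $X-K=0$ is a second-player win, Left's move to $X-K^{R}$ must admit a winning Right reply, i.e.\ a Right move to a position $\leqslant 0$. That reply cannot be to $X-K^{RL}$ with $X\leqslant K^{RL}$, for then $K^{RL}\geqslant K$ and $K^{R}$ would be a reversible option of $K$, contradicting that canonical forms have no reversible options in any subposition. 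Hence the reply is to some $X^{R}-K^{R}\leqslant 0$, giving $X^{R}\leqslant K^{R}$ as required. With the sub-lemma proved this way, your inheritance argument and the rest of your proof go through.
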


\begin{proof}
See \cite{Mc}, page 42.
\end{proof}

Here, we will prove some results about ordinal sums with the form $\{G\,|\,\}:H$. In those ordinal sums, $G$ is not the base; the base is $\{G\,|\,\}$. Also, as stated in Theorem \ref{th:g}, the game value of $\{G\,|\,\}:H$ does not depend on the game form of $G$.

\begin{theorem}\label{th:g} Let $G$, $G'$ and $H$ be game forms. If $G=G'$ then\linebreak $\{G\,|\,\}:H=\{G'\,|\,\}:H$.
\end{theorem}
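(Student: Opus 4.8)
The plan is to deduce the equality from a one-sided monotonicity statement: I will show that $G\geqslant G'$ implies $\{G\,|\,\}:H\geqslant\{G'\,|\,\}:H$. Since $G=G'$ means both $G\geqslant G'$ and $G'\geqslant G$, applying this with the two bases interchanged gives both inequalities, hence the equality. I would prove the monotonicity statement by induction on the form of the subordinate $H$, keeping $G,G'$ (with $G\geqslant G'$) as part of the universally quantified hypothesis, so that the inductive hypothesis is available for every such pair of bases.

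Set $X=\{G\,|\,\}:H$ and $Y=\{G'\,|\,\}:H$. Expanding by the recursive definition of the ordinal sum, and using that the base $\{G\,|\,\}$ has $G$ as its only Left option and no Right option, I get
\[
X=\{\,G,\ \{G\,|\,\}:\HL \mid \{G\,|\,\}:\HR\,\},\qquad
Y=\{\,G',\ \{G'\,|\,\}:\HL \mid \{G'\,|\,\}:\HR\,\}.
\]
To establish $X\geqslant Y$ I would invoke the standard recursive criterion: $X\geqslant Y$ holds if and only if no Right option of $X$ is $\leqslant Y$ and no Left option of $Y$ is $\geqslant X$. Thus there are three families of comparisons to rule out, coming from the bare option $G'$ of $Y$, the options $\{G'\,|\,\}:\HL$ of $Y$, and the options $\{G\,|\,\}:\HR$ of $X$.

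Each of these is dispatched by combining two facts. The first is the elementary and well-known observation that no Left option of a game is $\geqslant$ the game, and dually no Right option is $\leqslant$ the game (if $X^{\mathcal L}\geqslant X$ then, in $X-X^{\mathcal L}$, Left could move on the $X$-side to $X^{\mathcal L}$ and leave the zero position $X^{\mathcal L}-X^{\mathcal L}$ with Right to move, contradicting $X-X^{\mathcal L}\leqslant 0$). The second is the inductive hypothesis, which gives $\{G\,|\,\}:\HL\geqslant\{G'\,|\,\}:\HL$ and $\{G\,|\,\}:\HR\geqslant\{G'\,|\,\}:\HR$. For the bare option: $G$ is a Left option of $X$, so $G\not\geqslant X$; were $G'\geqslant X$ we would get $G\geqslant G'\geqslant X$, a contradiction. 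For the remaining two families I argue identically, using that $\{G\,|\,\}:\HL$ is genuinely a Left option of $X$ and $\{G'\,|\,\}:\HR$ genuinely a Right option of $Y$, and then inserting the inductive comparison to transport the impossibility across from the base $\{G\,|\,\}$ to the base $\{G'\,|\,\}$ and vice versa.

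The delicate point — and the reason one cannot simply quote the Colon Principle — is the presence of the extra bare option $G$ sitting beside the recursively generated options $\{G\,|\,\}:\HL$ in the expansion of $X$. This is exactly the feature that, in a general ordinal sum, lets the \emph{form} of the base influence the value. The heart of the proof is the observation that this extra option is never dominant, which is what the option-comparison lemma guarantees; once that is in hand, the argument needs only the recursive order criterion, the inductive comparisons, and transitivity, and in particular it avoids any analysis of reversible options in the base.
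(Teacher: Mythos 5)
Your proof is correct, and it takes a genuinely different route from the paper's. The paper argues strategically: it forms the difference $\{G\,|\,\}:H+\{\,|\,-G'\}:(-H)$ and shows the second player wins by mirroring --- each subordinate move is answered by the corresponding subordinate move in the other component, and a move into a base is answered by entering the other base, leaving $G-G'$, which is a second-player win precisely because $G=G'$; induction is invoked only to say the mirrored subordinate positions are still second-player wins. You instead work entirely inside the partial order: by induction on the form of $H$ you prove the one-sided monotonicity statement that $G\geqslant G'$ implies $\{G\,|\,\}:H\geqslant \{G'\,|\,\}:H$, verifying the recursive criterion for $\geqslant$ (no Right option of the left-hand game is $\leqslant$ the right-hand game, and no Left option of the right-hand game is $\geqslant$ the left-hand game) with the help of the lemma that no Left (respectively Right) option of a game is $\geqslant$ (respectively $\leqslant$) the game itself, plus transitivity; the theorem then follows by applying monotonicity in both directions. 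Each approach buys something. The paper's proof is self-contained at the level of winning strategies and uses the hypothesis $G=G'$ exactly once, in the reply $G-G'$; but it yields only the stated equality. Your proof presupposes the standard order-theoretic toolkit, but it delivers strictly more than the theorem asks: monotonicity of $\{x\,|\,\}:H$ in the value of $x$, a base-side analogue of the Colon Principle for bases of this one-option literal form, which cannot be recovered formally from the equality statement alone. Your closing observation is also accurate: the Colon Principle governs the subordinate, not the base, and the bare option $G$ sitting beside the options $\{G\,|\,\}:\HL$ in the expansion is exactly the feature your option-comparison lemma neutralizes, which is why the argument goes through without any analysis of reversibility in the base.
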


\begin{proof}
Suppose that, in the game $\{G\,|\,\}:H+\{\,|\,-G'\}:(-H)$, Right moves to $\{G\,|\,\}:H^R+\{\,|\,-G'\}:(-H)$ or to $\{G\,|\,\}:H+\{\,|\,-G'\}:(-H^L)$. Then, Left answers $\{G\,|\,\}:H^R+\{\,|\,-G'\}:(-H^R)$ or $\{G\,|\,\}:H^L+\{\,|\,-G'\}:(-H^L)$ respectively, and, by induction, she wins. On the other hand, if Right moves to $\{G\,|\,\}:H-G'$, Left replies $G-G'$ and wins, since $G=G'$. Analogously, if Left plays first in  $\{G\,|\,\}:H+\{\,|\,-G'\}:(-H)$, then she loses. Hence,  $\{G\,|\,\}:H+\{\,|\,-G'\}:(-H)$ is a $\mathcal{P}$-position and $\{G\,|\,\}:H=\{G'\,|\,\}:H$.
\end{proof}

\subsection{Ordinal sums of {\sc blue-red hackenbush} strings}

It is known that the game values of {\sc blue-red hackenbush} strings are numbers and that there is a correspondence between the game values of {\sc blue-red hackenbush} strings and \emph{signed binary representations} \cite{Roode}.

{The part of a {\sc blue-red hackenbush} string after the first color change is represented by the digits after the binary point, whose value is a sum of powers of $2$. So, when we write $n.\overline{1}\,\overline{1}1\ldots$ (the overlines indicate negative powers of $2$); the represented value is $n-\frac{1}{2}-\frac{1}{4}+\frac{1}{8}+\ldots$ The signed binary notation is particulary appropriate for simultaneously describing the game value of the {\sc blue-red hackenbush} string and its sequence of blue and red edges. In the following example, $2.\overline{1}\,\overline{1}1$ stands for two blue edges, one red edge, one red edge, and one blue edge; see Figure~\ref{Fig: example of game}.

\begin{figure}[ht]
\begin{center}
\scalebox{0.8}{
\includegraphics[width=\linewidth]{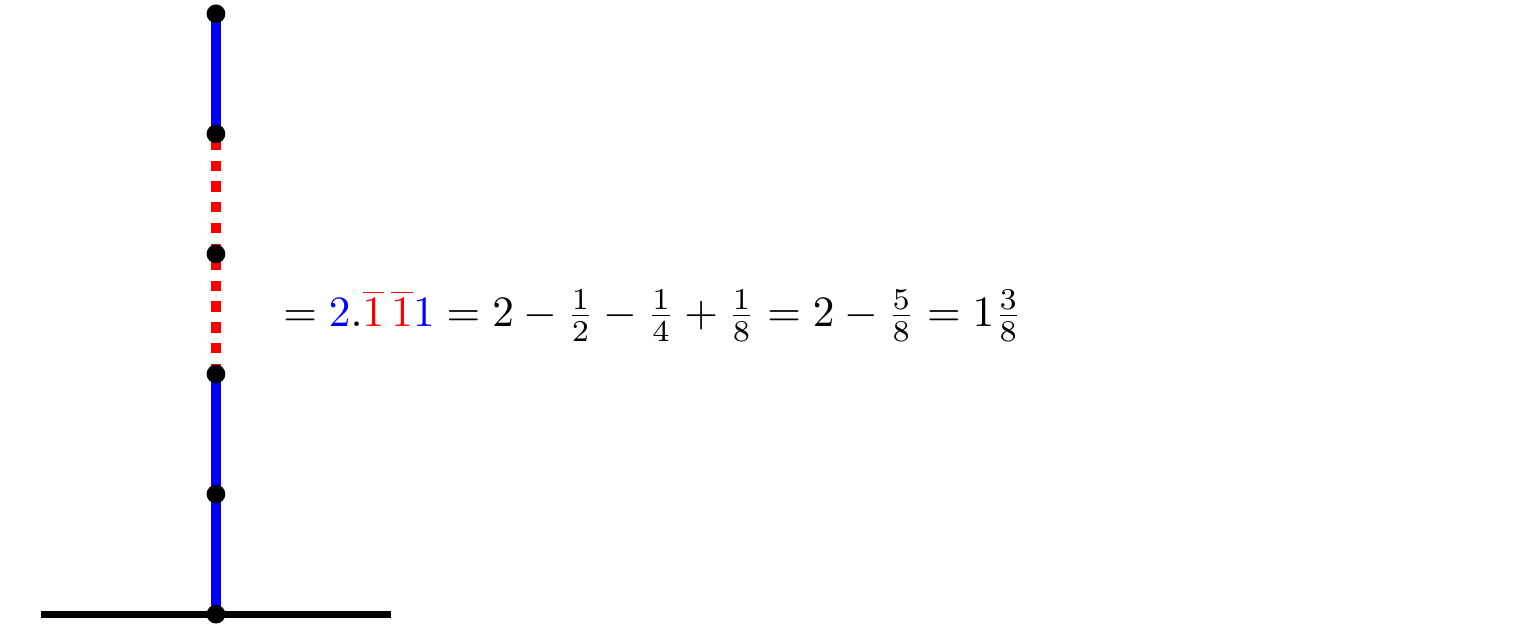}
}
\caption{Example of signed binary notation for a {\sc blue-red hackenbush} string.}\label{Fig: example of game}
\end{center}
\end{figure}
Also, if $G$ and $H$ are two {\sc blue-red hackenbush} strings, it is possible to have a closed formula to evaluate the game value of $G:H$, knowing the game values of $G$ and $H$. That is van Roode's method \cite{Roode}.

\begin{theorem}[van Roode's method]\label{roode}
Let $G$ be a positive  {\sc blue-red hackenbush} string whose game value is $n+d$, with $-1<d=-\frac{k}{2^j}\leqslant 0$. Then, we have the following.
\begin{enumerate}
  \item If $G$ is an integer and $H$ is a positive  {\sc blue-red hackenbush} string then $G:H=G+H$.
  \item If $G$ is not an integer and $H$ is a positive  {\sc blue-red hackenbush} string whose game value is $m+d'$, where $m$ is a positive integer and $-1<d'\leqslant 0$, then $G:H=n+d+\frac{1}{2^{j+m}}\left(2^m-1+d'\right)$.
  \item If $H$ is a negative  {\sc blue-red hackenbush} string whose game value is $m+d'$, where $m$ is a negative integer and $0\leqslant d'< 1$ then \linebreak $G:H=n+d+\frac{1}{2^{j+|m|}}\left(1-2^{|m|}+d'\right)$.
\end{enumerate}
\end{theorem}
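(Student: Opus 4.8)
The plan is to reduce the statement to a computation with the signed binary expansion, and then to justify that computation by an induction that appends the edges of $H$ one at a time. The first observation is that the ordinal sum $G:H$ is nothing but the {\sc blue-red hackenbush} string obtained by stacking $H$ on top of $G$: the recursion $G:H=\{G^{\mathcal L},G:H^{\mathcal L}\mid G^{\mathcal R},G:H^{\mathcal R}\}$ matches exactly the hackenbush options of the concatenated string, since a cut inside the base $G$ destroys everything above it while a cut inside $H$ leaves $G$ intact. By the signed binary correspondence \cite{Roode} it therefore suffices to read off the value of this concatenated string. Writing $v(G)=n+d$ with $d=-k/2^{j}\leqslant 0$, the index $j$ is the resolution of $G$, i.e.\ its topmost edge sits in the $2^{-j}$ place; stacking $H$ puts the successive edges of $H$ into the places $2^{-(j+1)},2^{-(j+2)},\dots$, each blue edge contributing $+2^{-(j+i)}$ and each red edge $-2^{-(j+i)}$.

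Case 1 is transparent from this picture. When $G=n$ is an integer it is an all-blue string, and since a positive $H$ also begins with a run of blue edges, the two blue runs merge: the concatenation is $n$ blue edges followed by $H$, whose signed binary value is precisely $n+v(H)$, so $n:H=n+H$. Equivalently one checks by induction (taking $H$ in canonical form, which is licit since the value of $G:H$ depends only on the value of $H$) that $n:H=\{0,1,\dots,n-1,\,n+H^{\mathcal L}\mid n+H^{\mathcal R}\}$, where the integer options $0,\dots,n-1$ are dominated by $n+H^{\mathcal L}$, so the simplicity rule returns $n+H$.

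For Cases 2 and 3 the leading edges of $H$ no longer merge with an integer run, because $G$ is not an integer and its topmost edge already lies in the fractional region; hence every edge of $H$ contributes a strictly sub-$2^{-j}$ power. If $H$ is positive with value $m+d'$, then $H$ consists of $m$ leading blue edges followed by a fractional tail of value $d'$; the leading run contributes $\sum_{l=1}^{m}2^{-(j+l)}=(2^{m}-1)/2^{j+m}$, and the tail, shifted by $2^{-(j+m)}$, contributes $d'/2^{j+m}$, giving $G:H=n+d+\tfrac{1}{2^{j+m}}(2^{m}-1+d')$. If instead $H$ is negative with value $m+d'$, its leading run of $|m|$ red edges contributes $-(2^{|m|}-1)/2^{j+|m|}$ and the tail contributes $d'/2^{j+|m|}$, giving $G:H=n+d+\tfrac{1}{2^{j+|m|}}(1-2^{|m|}+d')$. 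This geometric bookkeeping is exactly the content of the two displayed formulas.

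To turn this reading-off into a proof I would induct on the number of edges of $H$, appending a single edge at each step and evaluating the resulting position by the simplicity rule. The key lemma is that appending a blue (resp.\ red) edge to a non-integer string of resolution $2^{-t}$ yields value $+2^{-(t+1)}$ (resp.\ $-2^{-(t+1)}$) and new resolution $2^{-(t+1)}$; telescoping it over the edges of $H$ reproduces the geometric sums above. The main obstacle is precisely the phenomenon emphasised before \cref{th:g}: in an ordinal sum the \emph{form} of the base matters, so one cannot simply replace $G$ by its value inside the recursion. I must instead work with the literal hackenbush cuts of the base string $G$ and verify that, against the claimed value, the binding Left and Right options are the topmost admissible cuts (the classical ``highest edge is best'' principle for strings), while all lower cuts are dominated or reversible. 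Once the binding options are pinned down, each inductive step is a routine application of the simplicity rule, and the only remaining care is the indexing of the powers of two in the leading-run geometric series.
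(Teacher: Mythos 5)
Your proposal is correct and takes essentially the same route as the paper: the paper's entire proof of \cref{roode} is a citation of Berlekamp's/van Roode's signed-binary valuation rule for {\sc blue-red hackenbush} strings, and your argument---identifying $G:H$ with the string obtained by stacking $H$ on top of $G$ and reading off the shifted signed binary digits, so that the blue run of $H$ contributes $(2^{m}-1)/2^{j+m}$ and the tail contributes $d'/2^{j+m}$ (mirrored in Case 3)---is precisely the derivation that citation points to. Your extra material (the edge-by-edge induction sketch and the worry about the form of the base) fills in details the paper leaves to the references rather than diverging from its approach; note only that the base-form concern is moot here, since $G:H$ is itself a literal string and the sign expansion of a given value is unique.
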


\begin{proof} This result is well known and follows from either Berlekamp's or van
Roode's rule for a {\sc blue-red hackenbush} string \cite{Roode} and \cite{Ber}.
\end{proof}

\begin{example}
Consider the games $G$ and $H$, as follows:

\begin{minipage}[t]{0.5\textwidth}
\begin{center}$G=$ \raisebox{-1cm}{\includegraphics[scale=0.5]{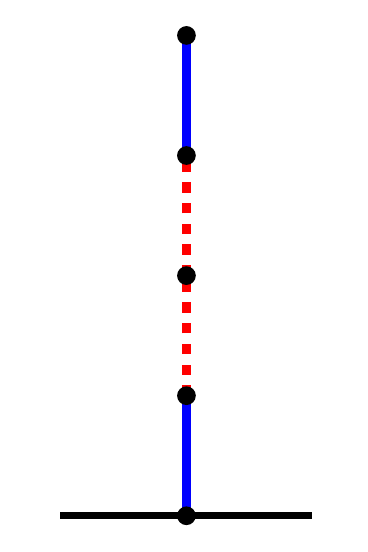}} $=\frac{3}{8}=1-\frac{5}{8}$\end{center}

$$n=1,\,d=-\frac{5}{2^3},\,j=3$$
\end{minipage}
\quad
\begin{minipage}[t]{0.4\textwidth}

\begin{center}$H=$ \raisebox{-1cm}{\includegraphics[scale=0.5]{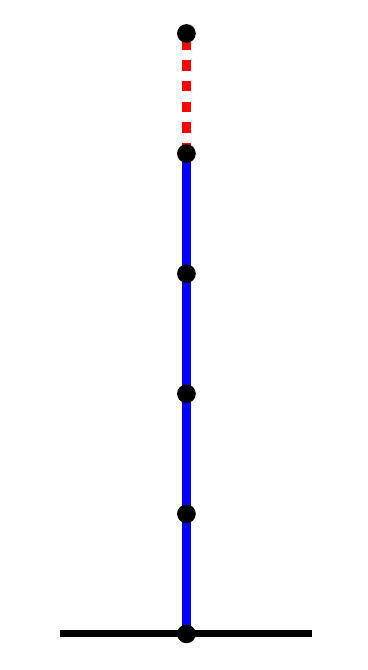}} $=3\frac{1}{2}=4-\frac{1}{2}$\end{center}

$$m=4,\,d'=-\frac{1}{2}$$
\end{minipage}

\vspace{1cm}
We want to evaluate $G:H$,

\begin{center}$G:H=$ \raisebox{-1cm}{\includegraphics[scale=0.5]{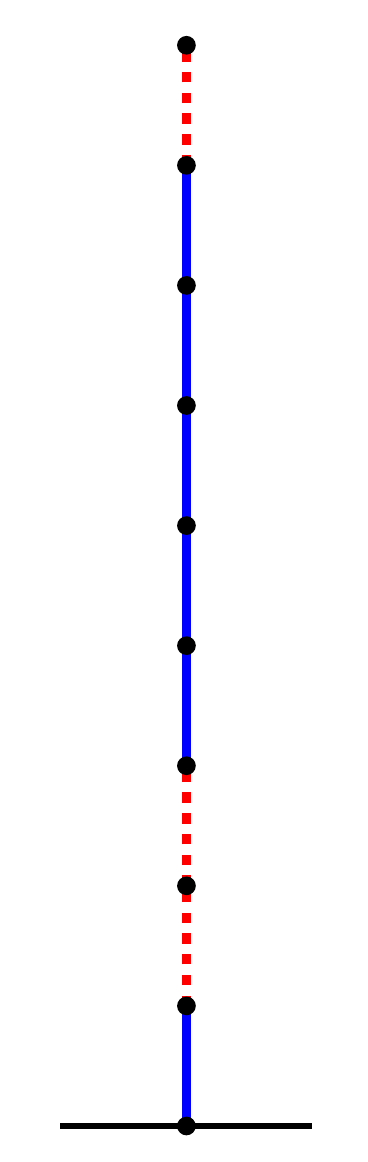}}.\end{center}

We have
$$G:H=n+d+\frac{1}{2^{j+m}}\left(2^m-1+d'\right)=1-\frac{5}{8}+\frac{1}{2^7}\left(2^4-1-\frac{1}{2}\right)=\frac{125}{256}.$$
\end{example}

\begin{example}Consider the games $G$ and $H$, as follows:

\begin{minipage}[t]{0.4\textwidth}
\begin{center}$G=$ \raisebox{-1cm}{\includegraphics[scale=0.5]{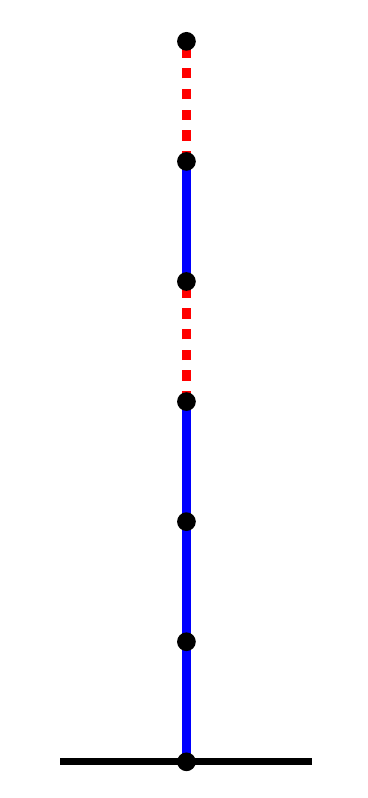}} $=2\frac{5}{8}=3-\frac{3}{8}$\end{center}

$$n=3,\,d=-\frac{3}{2^3},\,j=3$$
\end{minipage}
\quad
\begin{minipage}[t]{0.5\textwidth}
\begin{center}$H=$ \raisebox{-1cm}{\includegraphics[scale=0.5]{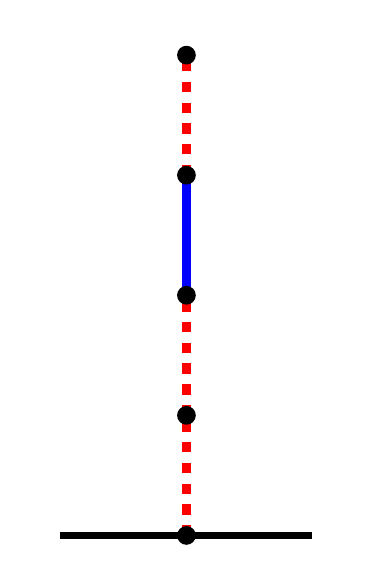}} $=-1\frac{3}{4}=-2+\frac{1}{4}$\end{center}

$$m=-2,\,d'=\frac{1}{4}$$
\end{minipage}

\vspace{1cm}
We want to evaluate $G:H$,

\begin{center}$G:H=$ \raisebox{-1cm}{\includegraphics[scale=0.5]{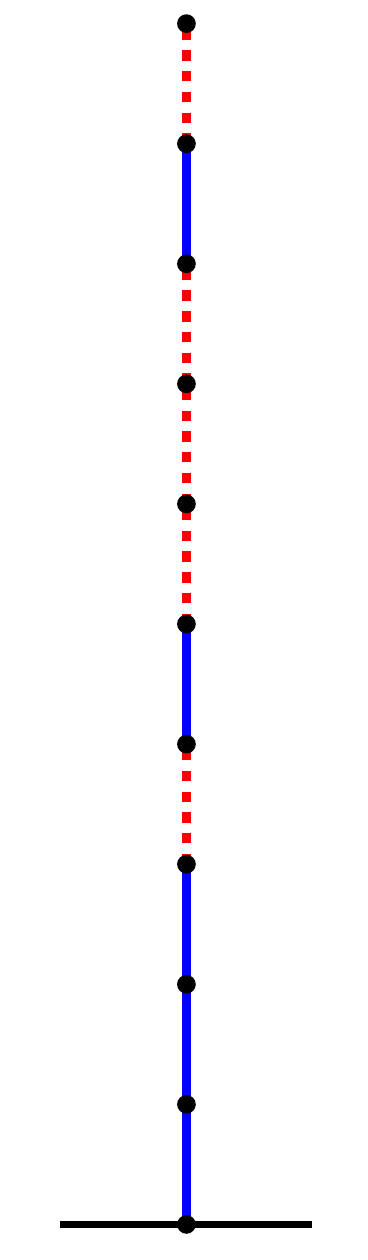}}.\end{center}

We have
$$G:H=n+d+\frac{1}{2^{j+|m|}}\left(1-2^{|m|}+d'\right)=3-\frac{3}{8}+\frac{1}{2^5}\left(1-2^2+\frac{1}{4}\right)=2\frac{69}{128}.$$
\end{example}

Van Roode's method was conceived to evaluate ordinal sums of {\sc blue-red hackenbush} strings. However, since {\sc blue-red hackenbush} strings have no reversible options, by Theorem \ref{th:mk}, this method can be used to evaluate an ordinal sum of numbers where the base is in canonical form.

\section{The analysis of {\sc clockwise  hackenbush}}\label{sec:ch}

In order to analyse {\sc clockwise  hackenbush} positions and facilitate the proofs, it is important to have notation for the important elements.

\begin{definition}
Let $G$ be a {\sc clockwise blue-red hackenbush} position. Let $T_G$ be the trunk of $G$ with  $V(T_G)=\{s_0,s_1,\ldots,s_n\}$
and $E(T_G)=\{t_1,t_2,\ldots,t_n\}$, all labelled from bottom to top. Let $G_i$ be the position resulting from the deletion of $t_i$ and let $S_1=G_1$ and for $i>1$, $S_i=G_i\setminus (G_{i-1}\cup \{t_{i-1}\})$. Finally, for $i \geqslant 1$, let $M_i=S_i\cup\{t_i\}$.
\end{definition}

The subtree $S_1$ is the part of the tree remaining after deleting $t_1$ and,  for $i>1$, not counting with $t_i$, $S_i$ is the part of the tree that is eliminated by deleting $t_{i-1}$ but not by deleting $t_i$. In other words, the subtree above $t_{i-1}$ that does not include $t_i$. The idea is represented in Figure \ref{fig: notation}.

\begin{figure}[hbt]
\begin{center}
\includegraphics[width=0.5\linewidth]{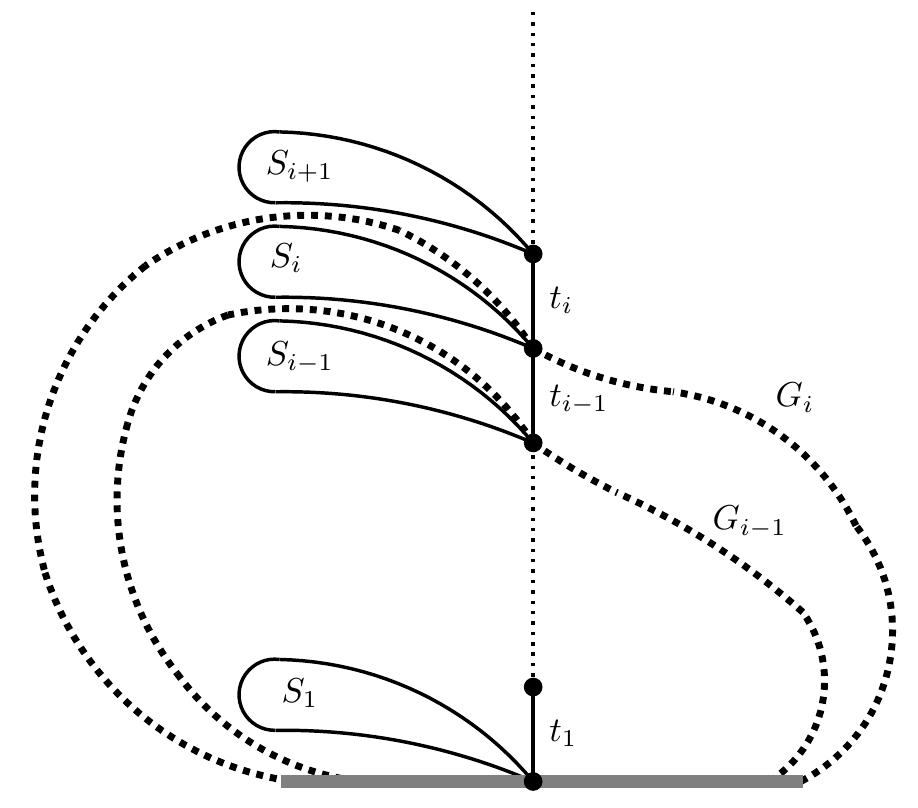}
\caption{Notation for the elements of a {\sc clockwise  hackenbush} position.}\label{fig: notation}
\end{center}
\end{figure}

\begin{theorem}\label{thm:OSDecomp} Let $G$ be a {\sc clockwise blue-red hackenbush} position. Then
$G=M_1:(M_2:(\ldots:(M_{n-1}:M_n)\ldots))$.
\end{theorem}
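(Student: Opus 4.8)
The plan is to induct on the number of edges of $G$ and to reduce the whole statement to a single \emph{top-level split}: the claim that $G = M_1 : \widehat{G}$, where $\widehat{G}$ is the {\sc clockwise blue-red hackenbush} position lying above the vertex $s_1$ (its trunk is $t_2,\dots,t_n$, with hanging subtrees $S_2,\dots,S_n$). Granting this split the theorem is immediate: $\widehat{G}$ has strictly fewer edges than $G$, so by the induction hypothesis $\widehat{G} = M_2:(\dots:M_n)$, and then the Colon Principle in the form ``$H=H'$ implies $M_1:H=M_1:H'$'' promotes $G = M_1:\widehat{G}$ to $G = M_1:(M_2:(\dots:(M_{n-1}:M_n)\dots))$. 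The base case $n=1$ is trivial, since then $G = M_1$ and the right-hand side is the single term $M_1$.

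The core is thus the split $G = M_1:\widehat{G}$, which I would prove by matching options. In the blue-red setting every trunk edge is blue (a Left edge) or red (a Right edge), so the moves available from $G$ are precisely the removals of one trunk edge $t_j$ with $1\le j\le n$; no edge of a hanging subtree is on the trunk yet. I split the analysis by whether $j=1$. Removing $t_1$ detaches everything above $s_1$ and leaves exactly $S_1$; as the trunk of $M_1=S_1\cup\{t_1\}$ is the single edge $t_1$, this coincides with the base option $M_1^{\mathcal L}$ (or $M_1^{\mathcal R}$), which annihilates the subordinate, matching the terms $M_1^{\mathcal L}$, $M_1^{\mathcal R}$ in the ordinal-sum definition. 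Removing $t_j$ with $j>1$ leaves the block $M_1$ (which sits entirely below $s_1$) untouched, and the resulting position $P_j$ has fewer edges, with its part below $s_1$ still equal to $M_1$ and its part above $s_1$ equal to $\widehat{G}$ with $t_j$ removed, i.e. the corresponding option of $\widehat{G}$. The induction hypothesis applied to $P_j$, together with the Colon Principle, identifies $P_j = M_1:\big(\widehat{G}\text{ with }t_j\text{ removed}\big)$, which is exactly the subordinate option $M_1:\widehat{G}^{\mathcal L}$ (or $M_1:\widehat{G}^{\mathcal R}$). Reading this correspondence in both directions shows $G$ and $M_1:\widehat{G}$ have the same Left and Right options up to equality, whence $G=M_1:\widehat{G}$.

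The step I expect to be the main obstacle is the bookkeeping forced by the remark that ``a branch that was not on the trunk can become part of the trunk'': after any removal the trunk is recomputed, so deleting $t_j$ may promote the subtree $S_j$ formerly hanging at $s_{j-1}$ into the new trunk of $P_j$. This does not disturb the top-level split itself---$M_1$ still lies below $s_1$ and is fixed by every move with $j>1$---but it forces me to check that the blocks $M_i$ computed for $G$, for $\widehat{G}$, and for each intermediate $P_j$ really coincide, so that the induction hypothesis returns exactly the ordinal-sum option I want and not a differently parsed one. Concretely I would verify that $S_1$ (``the branches at $s_0$ after deleting $t_1$'') is the same whether computed in $G$ or in any $P_j$, and more generally that deleting an upper trunk edge never alters a lower $M_i$; this consistency is precisely what lets the Colon Principle be invoked cleanly in the final step.
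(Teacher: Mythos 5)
Your proof is correct and takes essentially the same route as the paper: your $\widehat{G}$ is exactly the paper's $H$ (the tree above $t_1$, regrounded at $s_1$), and both arguments induct on the number of edges, match the option obtained by deleting $t_1$ with the base option $M_1^{\mathcal{L}}$ or $M_1^{\mathcal{R}}$, identify the options obtained by deleting $t_j$, $j>1$, with the subordinate options $M_1:\widehat{G}^{\mathcal{L}}$ or $M_1:\widehat{G}^{\mathcal{R}}$ via the induction hypothesis, and then replace $\widehat{G}$ by its nested decomposition using the Colon Principle. The bookkeeping you flag at the end --- that deleting an upper trunk edge leaves $M_1$ and the lower blocks unchanged --- is exactly the point the paper's proof leaves implicit.
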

\begin{proof}We induct on  the size of $G$. If $E(T_{G})=\{t_1\}$ then $G=M_1$.


We may now suppose  that $E(T_G)=\{t_1,t_2,\ldots,t_n\}$ and $n>1$. Let $H$ be the position formed by $G\setminus M_1$, that is, the tree above but not including $t_1$, and the vertex $s_1$ is the ground. The trunk of $H$ is $\{t_2,t_3,\ldots, t_n\}$.

In $G$, there are two types of moves. Either, in $M_1$, delete $t_1$ leaving $S_1$; or delete $t_i$, $i>1$ which is a move in  $H$. By induction,
the move in $H$ is to $M_1:H^L$ ($M_1:H^R$) for some Left (Right) option of $H$. Also by induction, $H=M_2:(\ldots:(M_{n-1}:M_n)\ldots)$. It follows then that
\begin{eqnarray*}
G&=&\{M_1^L, M_1:\HL \mid M_1^R, M_1:\HR \}\\
&=&M_1:(M_2:(\ldots:(M_{n-1}:M_n)\ldots)),
\end{eqnarray*}
 and the result is proved.
\end{proof}

\cref{thm:OSDecomp} shows that we will have to evaluate ordinal sums. If the values were arbitrary  then no formula could be given. However,  {\sc clockwise blue-red hackenbush} positions have similar strategic features to {\sc  blue-red hackenbush} strings. Specifically, for either player, the
unique best move is their highest and the value is a number. This we prove next. Each $M_i$ has only one option, that of deleting the trunk edge. The ordinal sums, therefore, will be of the form $\{x \, \mid \, \}:y$ or $\{\,\mid \,x\,\}:y$ for numbers $x$ and $y$. A closed formula for this type of ordinal sums is one of the main contributions of this paper (Subsection \ref{subsec:emptybase}). Before that, we prove that {\sc clockwise blue-red hackenbush} positions only have numbers as game values, and that the best options for the players are the topmost allowed moves.

\begin{lemma}\label{lem:numbers}
Let $G$ be a {\sc clockwise blue-red hackenbush} position.
If $t_i$ and $t_j$ are blue edges and  $j>i$, then $G_j>G_i$. If $t_i$ and $t_j$ are red edges and $i<j$, then $G_j<G_i$.
\end{lemma}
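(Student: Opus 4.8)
The plan is to prove, by induction on the number of edges of $G$, the \emph{stronger} statement that every {\sc clockwise blue-red hackenbush} position is a number; both monotonicity claims will then fall out instantly. Everything rests on one bookkeeping observation about trunks, which I would record first. If $t_a$ and $t_b$ are trunk edges with $a<b$, then in $G_b$ (the result of deleting $t_b$) the edge $t_a$ is still a trunk edge of the same colour, and deleting $t_a$ from $G_b$ produces exactly $G_a$. Indeed, deleting $t_b$ leaves precisely the part of $G$ at or below $s_{b-1}$, so $t_1,\dots,t_{b-1}$ — in particular $t_a$ — survive on the trunk; and stripping $t_a$ from either $G$ or $G_b$ removes everything above $s_{a-1}$, leaving the same sub-position $G_a$. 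Hence $G_a$ is a Left option of $G_b$ when $t_a$ is blue, and a Right option of $G_b$ when $t_a$ is red.

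For the induction itself, note that since moves occur only on the trunk, every option of $G$ is some $G_i$: a Left option if $t_i$ is blue, a Right option if $t_i$ is red. Each $G_i$ is a smaller position, hence a number by the inductive hypothesis, so by the usual characterisation of numbers it suffices to check that every Left option lies strictly below every Right option, i.e. $G_i<G_{i'}$ whenever $t_i$ is blue and $t_{i'}$ is red. I would split on which of the two deleted edges is lower. If $t_i$ is below $t_{i'}$ (so $i<i'$), the structural observation makes $G_i$ a Left option of the number $G_{i'}$, giving $G_i<G_{i'}$; if instead $t_{i'}$ is below $t_i$ (so $i'<i$), it makes $G_{i'}$ a Right option of the number $G_i$, again giving $G_i<G_{i'}$. (A number is strictly greater than each of its Left options and strictly less than each of its Right options.) Either way the inequality holds, so $G$ is a number.

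With number-hood established, the stated lemma is immediate. In the blue case, with $j>i$ and $t_i,t_j$ blue, the position $G_j$ is a number and, by the structural observation, $G_i$ is one of its Left options; therefore $G_j>G_i$. The red case is the mirror image: $G_i$ is a Right option of the number $G_j$, so $G_j<G_i$. The strictness is free, since a number is never equal to any of its options, and it is worth remarking that only the colour of the lower edge $t_i$ is actually used.

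The step I expect to demand the most care is the structural observation: one must verify that, after a cut, the intended edges genuinely remain on the (possibly re-routed) trunk of the surviving component, and that deleting $t_a$ inside $G_b$ and inside $G$ reaches the very same position $G_a$. This is geometrically transparent but requires honest bookkeeping of which vertices and branches survive each deletion. Once it is in place, the remainder is a routine application of the characterisation of numbers; in particular I would not need the ordinal-sum decomposition or the Colon Principle for this lemma, though one could alternatively phrase the comparison through \cref{thm:OSDecomp} and monotonicity of the subordinate.
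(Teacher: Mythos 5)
Your proposal is correct, but it follows a genuinely different route from the paper's. Both arguments hinge on the same structural observation, which the paper uses only implicitly: for $a<b$, the edge $t_a$ survives, with its colour, on the (possibly re-routed) trunk of $G_b$, and deleting it there yields exactly $G_a$, so $G_a$ is an option of $G_b$. The paper then proves the lemma head-on by strategy: it shows $G_j-G_i>0$ by giving Left a winning plan both moving first (delete $t_i$ inside $G_j$, reaching $G_i-G_i$) and moving second, where the delicate case --- Right playing on the re-routed trunk of $-G_i$ --- is settled by an induction on an auxiliary position $H$ built from $G_i$ by adding an extra blue edge atop its trunk. You instead prove by induction on the number of edges that every position is a number (each option $G_i$ is a number by induction, and the structural observation plus the total order on numbers forces every Left option below every Right option), and then the lemma falls out of order-theoretic generalities. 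In effect you establish \cref{cor:numbers2}(2) first and deduce \cref{lem:numbers} from it, reversing the paper's logical order, in which the corollary is a consequence of the lemma; your route is shorter, needs no bespoke strategy analysis, and even yields the slightly stronger statement that only the colour of the lower edge matters, but it leans on the Simplicity Theorem (a game whose options are all numbers, with every Left option strictly below every Right option, is itself a number), whereas the paper's proof is self-contained. One fine point deserves care: your parenthetical claim that ``a number is strictly greater than each of its Left options'' is false for arbitrary forms equal to numbers --- $\{*\mid *\}=0$ has Left option $*$, which is confused with $0$. The fact that holds in general is $G\not\leq G^L$ (and dually $G\not\geq G^R$); strictness then follows in your setting because both $G_j$ and $G_i$ are numbers, hence comparable. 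Since your induction supplies exactly that number-hood, the argument stands, but you should cite the precise fact rather than the loose slogan.
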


\begin{proof}
We first assume that $t_i$ and $t_j$ are both blue edges and we show  $G_j-G_i>0$.

We will induct on the number of edges in $G$. If $G$ consists of exactly two blue edges, then $G=2$.
If Left deletes the higher edge this leaves a tree with exactly one blue edge which has value $1$. If she deletes
 the lower edge this leaves a tree with zero edges and it has value $0$. Thus the lemma holds for the base case.
 We now suppose $G$ has more than two edges.

Left, going first, can win by deleting $t_i$ in $G_j$ since this results in $G_i-G_i=0$. \\
Now consider Right moving first. If Right plays an edge of  $G_j$ but does not eliminate the edge $t_i$ then Left responds in $G_j$ by deleting $t_i$.  Again, this results in $G_i-G_i = 0$. If Right plays in $G_j$ and does eliminate $t_i$ then he has deleted an edge on the trunk, i.e., some $t_\ell$, $\ell<i$. This leaves $G_{\ell}-G_i$. Left responds in $-G_{i}$ by deleting $t_\ell$, that, by symmetry, is a blue edge. This gives $G_\ell-G_\ell=0$.

The last remaining case is that Right deletes an edge on the new trunk in $-G_i$. Let the trunk of $G_i$  be
$T_1=\{t'_1,t'_2,\ldots,t'_m\}$ where $t'_a=t_a$ for $1\leqslant a\leqslant i-1$. Right deletes $t'_\ell$ for $i\leqslant \ell\leqslant m$.
We claim that deleting $t_i$, in $G_j$, is a winning move.  To see this, let $H$ be identical to $G_i$ but with an extra blue edge $T'_{m+1}$ at the top of $T_1$.  After Right has deleted $t'_\ell$ in $-G_i$ and Left $t_i$ in $G_j$, the situation is identical to playing in $H_{m+1}-H_\ell$. Now both $t_i$ and $t_j$ are not in $H$ thus $H$ has at least one fewer edge than $G$. It follows by induction that
$H_{m+1}-H_\ell>0$.

The proof for when $t_i$ and $t_j$ are both red edges is similar and omitted.
\end{proof}

\begin{corollary}\label{cor:numbers2} Let $G$ be a {\sc clockwise blue-red hackenbush} position.

\begin{enumerate}
\item Left's (Right's) move of deleting the topmost blue (red)  edge on the trunk dominates all other options.
\item The value of $G$ is a number.
\end{enumerate}
\end{corollary}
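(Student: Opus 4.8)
The plan is to obtain both parts by induction on the number of edges of $G$, feeding \cref{lem:numbers} into two standard facts about normal‑play numbers: the simplicity criterion (a game all of whose options are numbers, with every Left option strictly below every Right option, is itself a number; see \cite{BCG}) and the fundamental inequality (no Right option of a game can be less than or equal to the game, and dually for Left options). The base case, where the trunk has a single edge, gives $G=\pm 1$ and makes both statements trivial.

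Part (1) is just \cref{lem:numbers} rephrased as domination. Left's legal moves are exactly the deletions of blue trunk edges, producing the options $G_i$ with $t_i$ blue; by the Lemma, $G_i<G_{i'}$ whenever $i<i'$ with $t_i,t_{i'}$ both blue, so the topmost blue trunk edge yields the largest of these values. A Left option of larger value dominates one of smaller value, so deleting the topmost blue edge dominates all of Left's other moves. The argument for Right is the mirror image: among red‑edge deletions the value decreases as the edge rises, so the topmost red edge gives the smallest value, which is the one Right prefers.

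For part (2) I would verify the hypotheses of the simplicity criterion. By induction every option $G_i$ is a smaller {\sc clockwise blue–red hackenbush} position, hence a number. Using part (1) and domination, it then suffices to compare the two dominant options: writing $t_{j^*}$ and $t_{k^*}$ for the topmost blue and topmost red trunk edges, I need $G_{j^*}<G_{k^*}$ (if one player has no trunk move the corresponding family of options is empty and the requirement is vacuous). Indeed, every Left option is $\leqslant G_{j^*}$ and every Right option is $\geqslant G_{k^*}$, so a single strict inequality between the dominant pair separates all Left options from all Right options.

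The crux is this one cross‑colour comparison, and the key observation makes it immediate. The topmost trunk edge $t_n$ is blue or red; assume it is blue (the red case is symmetric under negation, which swaps the two colours and the two players), so $j^*=n$ and $k^*<n$. Deleting $t_n$ leaves the trunk $t_1,\dots,t_{n-1}$, which still contains the red edge $t_{k^*}$; hence Right may delete $t_{k^*}$ inside $G_{j^*}=G_n$, and doing so yields exactly $G_{k^*}$. Thus $G_{k^*}$ is literally a Right option of $G_n$. Since $G_n$ and $G_{k^*}$ are both numbers (induction), they are comparable, and no Right option of a game can be $\leqslant$ the game, so $G_{j^*}=G_n<G_{k^*}$. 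The simplicity criterion then gives that $G$ is a number. I expect the only genuinely delicate point to be securing the \emph{strict} separation of the dominant options; the device above avoids any appeal to a strict form of the Colon Principle by exhibiting each smaller‑indexed dominant deletion as an option of the number produced by deleting the top edge.
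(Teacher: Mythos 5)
Your proof is correct, and its skeleton matches the paper's---part (1) is read off from \cref{lem:numbers}, induction gives that all options are numbers, and the simplicity criterion finishes---but the device you use for the crucial strict separation in part (2) is genuinely different. The paper argues by augmentation: letting $H$ be $G$ with an extra blue edge on top of the trunk, both $G$ and $G^L$ become Left options of $H$, so \cref{lem:numbers} applied to $H$ gives $G^L<G$; adding a red edge instead gives $G<G^R$, whence $G^L<G<G^R$. You instead exploit the structure of deletions: whichever player owns the topmost trunk edge $t_n$, the other player's dominant option $G_{k^*}$ survives as a literal option of $G_n$ (deleting $t_n$ leaves $t_{k^*}$ on the trunk, and deleting $t_{k^*}$ there yields exactly $G_{k^*}$, since everything above $t_{k^*}$ is annihilated either way), after which the fundamental inequality (no game is $\geqslant$ any of its Right options) plus the total ordering of numbers gives $G_n<G_{k^*}$. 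The trade-off is real but small: the paper's augmentation re-uses \cref{lem:numbers}, obtains strictness without ever invoking comparability, and yields the slightly stronger fact that $G$ itself sits strictly between its dominant options; your route leans on the inductive hypothesis that the options are numbers (for general games $x^R\not\leqslant x$ permits incomparability, so the step would otherwise fail), but it needs no auxiliary position and uses only off-the-shelf facts. Both arguments handle the degenerate case of a player with no trunk edge vacuously, and both deliver exactly what the simplicity criterion requires.
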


\begin{proof} Part 1 follows immediately from 
 \cref{lem:numbers}. Part 1 gives that $G$ has only one Left and one Right un-dominated option, i.e., $G=\{ G^L\mid G^R\}$. By induction on the options, both options $G^L$ and $G^R$  are numbers. Let $H$ be $G$ with an extra blue edge on the top of the trunk. Both $G$ and $G^L$ are
Left options of $H$ and, by \cref{lem:numbers}, $G^L<G$. Similarly, by adding a red edge, we have $G<G^R$. Thus $G$ is a number.
\end{proof}

\subsection{Simplicity rule and binary notation}

Iterated ordinal sums occur naturally in \textsc{clockwise hackenbush} and the goal of this section is to find a procedure that evaluates them.  In what follows, recall that the form of the base is important.  For example, let $n$ be a number and consider the ordinal sum $\{n\mid\,\}:2$. The good moves are the topmost, thus
$$\{\,n\,\mid\,\}:2=\{\{\,n\mid\,\}:1\,\mid\,\}=\{\{\{\,n\mid\,\}\,\mid\,\}\,\mid\,\}.$$
Similarly,
$$\{\,n\mid\,\}:-2=\{\,n\mid\,\{\,n\mid\,\}:-1\}=\{\,n\mid\,\{\,n\mid\,\{n\,\mid\,\}\}\}.$$ In either case, the Simplicity Rule must be applied three times in a row and one of the options remains the same. This motivates the following definition.

If $a$ and $b$ are numbers and $a<b$ then the value of $\{a\mid b\}$ is the dyadic rational $p/2^q$ with $a<p/2^q<b$
and $q$ is minimal. In other words, and a fact that we will use often:

\textit{$\{a\mid b\}$ is the number $c$, $a<c<b$, that has the fewest number of digits in its binary expansion.}

The next result makes explicit the simplicity rule for evaluating $\{a\mid b\}$ for numbers $0\leqslant a<1$ and $a<b$.
We will then generalize the rule for iterated ordinal sums in Section \ref{subsec:emptybase}.
The procedure will use the binary expansions of numbers.
Each dyadic only has a finite number of non-zero bits in its binary expansion,
however, the procedure sometimes uses 0-bits past the last 1-bit. Therefore, while we denote the binary expansion of $d$ by  $d=_20.d_1d_2\ldots d_n$ but when we refer to the `first index' or `first occurrence' we may be considering the infinite binary expansion.
We abuse the `$=_2$' notation to mean that the important terms following the equal sign will be in binary. If this is followed by another `$=$' sign then we have reverted to base 10.\\

\begin{theorem}\label{simplicity1}
Let  $d$ be a dyadic rational such that $0<d<1$, and $d=_20.d_1d_2\ldots d_n$. Let $d<d'\leqslant +\infty$, and, if $d'<1$, then let $d'=_20.d'_1d'_2\ldots d'_m$.

\begin{enumerate}
  \item If $d'>1$, then $\{d\mid d'\}=1$.
  \item If $d'=1$ and $i$ be the index of the first $0$-bit of the binary expansion of $d$,
  then $\{d\mid 1\}=1-\frac{1}{2^i}$.
  \item If $d'<1$, then let $i$ be the first index such that $d_i=0$ and $d'_i=1$. Also, let $j$ be the least index, $j>i$ and $d_j=0$.

      If $d'\neq_2 0.d_1d_2d_3\ldots d_{i-1}1$, then $\{d\mid d'\}=_20.d_1d_2d_3\ldots d_{i-1}1$.

      If $d'=_2 0.d_1d_2d_3\ldots d_{i-1}1$, then $\{d\mid d'\}=_20.d_1d_2d_3\ldots d_{j-1}1$.
\end{enumerate}
\end{theorem}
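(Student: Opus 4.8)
The plan is to prove all three cases by exploiting the characterization that $\{d \mid d'\}$ is the number $c$ with $d < c < d'$ whose binary expansion has the fewest digits (equivalently, smallest denominator). So in each case I must exhibit a candidate number $c$ lying strictly between $d$ and $d'$, and then argue that no number with a shorter binary expansion can fit in the open interval $(d,d')$. The two directions are: (i) feasibility, $d < c < d'$, which is a direct comparison of binary expansions; and (ii) minimality, showing every number with strictly fewer binary digits is either $\leqslant d$ or $\geqslant d'$.

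\medskip

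First I would dispatch the easy cases. For case 1, since $d < 1 < d'$, the number $1$ lies in the interval and has a one-digit expansion (indeed it is an integer); as $0 < d$, there is no integer strictly between $d$ and $d'$ with shorter expansion forcing a fractional part, so $\{d \mid d'\} = 1$. For case 2, with $d' = 1$, the candidate is $c = 1 - \tfrac{1}{2^i} =_2 0.d_1 d_2 \ldots d_{i-1}1$, where $i$ is the first $0$-bit of $d$; I would check $d < c$ bit-by-bit (they agree up to position $i-1$, and $c$ has a $1$ where $d$ has a $0$) and $c < 1$ trivially. For minimality I would argue that any number in $(d,1)$ must share the initial block $d_1 \ldots d_{i-1}$ (since $d$ already has $1$'s there forced by being below... more carefully, any $c'$ with $c' > d$ and fewer than $i$ significant bits would have to truncate before position $i$, and the largest such number not exceeding the forced prefix is $\leqslant d$). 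This is the standard simplicity-rule computation.

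\medskip

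The substance of the theorem is case 3, and this is where I expect the main obstacle. Here $d < d' < 1$, $i$ is the first index where $d_i = 0$ and $d'_i = 1$ (the first place where $d'$ "pulls ahead" of $d$), and $j$ is the next $0$-bit of $d$ strictly past $i$. The natural candidate is $c =_2 0.d_1 \ldots d_{i-1}1$, obtained by copying the common prefix and placing a terminal $1$. I would first verify $d < c \leqslant d'$: the inequality $d < c$ holds because $c$ agrees with $d$ up to position $i-1$ and then beats $d$'s $0$ with a $1$ and terminates; and $c \leqslant d'$ holds because $c$ and $d'$ also agree up to $i-1$, both have a $1$ in position $i$, and $c$ terminates there. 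The delicate point is the boundary: if $d' =_2 0.d_1 \ldots d_{i-1}1$ exactly, then $c = d'$ is \emph{not} in the open interval, so this candidate is illegal and we must move to the next shortest number exceeding $d$ inside $(d,d')$, namely $0.d_1 \ldots d_{j-1}1$; otherwise $c < d'$ strictly and $c$ is the answer.

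\medskip

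The hard part will be the minimality argument in case 3, proving that no binary number with fewer significant digits than the stated $c$ can lie strictly between $d$ and $d'$. The key structural fact to isolate is that $d$ and $d'$ share the prefix $d_1 \ldots d_{i-1}$ and first differ at position $i$ (where $d$ has $0$, $d'$ has $1$); from this, any number strictly above $d$ must either match this prefix and then exceed $d$ somewhere, or exceed the prefix already and thereby reach or surpass $d'$. I would formalize this by a truncation/rounding argument: a number with $k < i$ significant bits is determined by its first $k$ bits, and I would show that rounding $d$ up to $k$ bits either stays $\leqslant d$ or jumps to $\geqslant d'$, so nothing shorter fits. For the subcase $d' = 0.d_1 \ldots d_{i-1}1$, the interval $(d, d')$ forces the shared prefix to extend through position $i$ (both numbers now agree on $d_1 \ldots d_{i-1}1$ up to the tail of $d$), and the first freedom occurs only at the next $0$-bit $j$ of $d$, which is exactly why the answer lengthens to $0.d_1 \ldots d_{j-1}1$; the minimality there reduces to the same truncation argument applied one block deeper. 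I would set this up as a single lemma on "shortest dyadic in an open interval given the first disagreement index" and then read off all three cases as instances, which keeps the bit-manipulation bookkeeping contained.
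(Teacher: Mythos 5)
Your proposal is correct and follows essentially the same route as the paper's proof: both rest on the ``fewest binary digits'' characterization of the simplest number, exhibit the candidate $0.d_1d_2\ldots d_{i-1}1$ (switching to $0.d_1d_2\ldots d_{j-1}1$ in the boundary subcase $d'=_2 0.d_1d_2\ldots d_{i-1}1$), and prove minimality by the prefix-forcing argument that any number in the open interval must agree with $d$ on its first $i-1$ (respectively $j-1$) digits. The only difference is organizational---you would isolate the bit-forcing step as a standalone lemma, while the paper argues each case directly---and this does not change the substance.
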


\begin{proof} The first case is trivial.\\

In the second case, by definition $\{d\mid 1\}>d$, then each of the first $i-1$ digits of the binary expansion of $\{d\mid 1\}$ must be ones.
 Therefore $\{d\mid 1\}\geqslant \frac{k}{2^j}$ with $j\geqslant i$. Observe now that inserting one more ``1'' in the position $i$ produces a dyadic strictly larger than $d$ and strictly smaller than 1. Therefore, the simplest dyadic that fits between $d$ and $1$ is $1-\frac{1}{2^i}=_20.11\ldots11$. \\

Regarding the third case, since $d<\{d\mid d'\}<d'$ the first $i-1$ digits of the binary expansion of $\{d\mid d'\}$
must be $d_1,d_2,d_3,\ldots,d_{i-1}$.
 Therefore, $\{d\mid d'\}=\frac{k}{2^w}$ for some $k$ and $w\geqslant i$. If $d'\neq_2 0.d_1d_2d_3\ldots d_{i-1}1$ then
 $0.d_1d_2d_3\ldots d_{i-1}1$ is the simplest dyadic that fits between $d$ and $d'$. If $d'=_2 0.d_1d_2d_3\ldots d_{i-1}1$, and since $d< \{d\mid d'\}<d'$, then the first $j-1$ digits of the binary expansion of $\{d\mid d'\}$ must be $d_1,d_2,d_3,\ldots,d_{j-1}$.  In that case, the simplest dyadic that fits between $d$ and $d'$ is
 $0.d_1d_2d_3\ldots d_{j-1}1$.
\end{proof}

\begin{example}
\begin{eqnarray*}
\left\{\frac{21}{32}\,\Big|\,\frac{45}{64}\right\}&=_2&\{0.10101\mid 0.101101\}=_20.1011=\frac{11}{16};\\
\left\{\frac{75}{128}\,\Big|\, \frac{19}{32}\right\}&=_2&\{0.1001011\mid 0.10011\}=_20.10010111=\frac{151}{256}.
\end{eqnarray*}
\end{example}

We have seen that there are two types of ordinal sums that occur in \textsc{clockwise blue-red hackenbush}. We write the formulas explicitly. The first, in \cref{roode}, is standard and appears in the analysis of \textsc{blue-red hackenbush} strings. The second happens when the literal form of the base is $\{x\,|\,\}$ or  $\{\,|\,x\}$, where $x$ is a number. That is analysed in the next section.

\subsection{Ordinal sums of numbers: the literal form of the base is $\{x\,|\,\}$ or  $\{\,|\,x\}$, where $x$ is a number} \label{subsec:emptybase}
The second type of ordinal sum that occurs in \textsc{clockwise blue-red hackenbush} is  $\{d\mid\, \}:m$. It still involves numbers but the base is not in canonical form. Some preliminary results are needed first.

If $n$ is a number then the Translation Principle states $\{\GL+n\,\mid\,\GR+n\}=n+\{\GL\,\mid\,\GR\}$ \cite{ANW,BCG,Con,Sie}. The following theorem describes a version of the translation principle for ordinal sums. Once we have this result,
the case $\{\,d\mid\,\}:\,$number $(0\leqslant d<1)$ turns out to be the only case to study.\\

\begin{lemma}[Translation principle for ordinal sums of numbers]\label{translation}
  Let $0\leqslant d< 1$ be a dyadic rational, $w$ any number, and $n$ an integer. Now,
   $$\{n+d\,\mid\,\}:w=n+\left(\{d\,\mid\,\}:w\right).$$
\end{lemma}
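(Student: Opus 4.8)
The plan is to prove the identity by induction on the birthday of the subordinate $w$, using only the recursive definition of the ordinal sum together with the ordinary Translation Principle for a number. The one structural feature that must be tracked is that the base $\{n+d\mid\}$ has exactly one Left option, namely $n+d$, and no Right option, so that the ordinal-sum recursion specializes to
\[
\{n+d\mid\}:w=\{\,n+d,\ \{n+d\mid\}:w^{\mathcal{L}}\ \mid\ \{n+d\mid\}:w^{\mathcal{R}}\,\}.
\]
Crucially, because the paper stresses that the \emph{form} of the base matters, we are not permitted to simplify $\{n+d\mid\}$: the literal non-moving option $n+d$ must be carried along unchanged at every step.

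For the inductive step I would first apply the induction hypothesis to each option $w^{\mathcal{L}}$ and $w^{\mathcal{R}}$ (these have strictly smaller birthday), replacing $\{n+d\mid\}:w^{\mathcal{L}}$ by $n+(\{d\mid\}:w^{\mathcal{L}})$ and likewise on the Right; substituting an equal-valued option preserves the value of the whole position. This turns the right-hand side of the display into $\{\,n+d,\ n+(\{d\mid\}:w^{\mathcal{L}})\ \mid\ n+(\{d\mid\}:w^{\mathcal{R}})\,\}$. Independently, expanding $\{d\mid\}:w$ by the same recursion gives $\{d\mid\}:w=\{\,d,\ \{d\mid\}:w^{\mathcal{L}}\ \mid\ \{d\mid\}:w^{\mathcal{R}}\,\}$, and then adding the integer $n$ through the Translation Principle (valid since $n$ is a number) yields
\[
n+(\{d\mid\}:w)=\{\,n+d,\ n+(\{d\mid\}:w^{\mathcal{L}})\ \mid\ n+(\{d\mid\}:w^{\mathcal{R}})\,\}.
\]
The two expressions now have identical options, so the two positions are equal.

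The base case $w=0$ carries the heart of the argument in miniature: both recursions collapse to $\{n+d\mid\}:0=\{n+d\mid\}$ and $\{d\mid\}:0=\{d\mid\}$, and the claim reduces to $\{n+d\mid\}=n+\{d\mid\}$, which is itself a one-line instance of the Translation Principle applied to the game with single Left option $d$ and no Right option.

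I do not expect a serious obstacle; the only points needing care are (i) invoking the Translation Principle on $\{d\mid\}:w$ in its \emph{given} form rather than in a canonical one, which is legitimate because the principle as recorded in the excerpt holds for an arbitrary $\{\GL\mid\GR\}$ translated by a number; and (ii) well-foundedness of the induction on the options of $w$. For (ii), observe that the displayed argument never uses that $w$ itself is a number (only that $n$ is), so the induction hypothesis applies verbatim to $w^{\mathcal{L}}$ and $w^{\mathcal{R}}$; alternatively, one may first invoke the Colon Principle to replace $w$ by its canonical form, whose options are again numbers.
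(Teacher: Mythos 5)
Your proposal follows the paper's own proof essentially step for step: the paper likewise expands the ordinal-sum recursion over the options of the subordinate, applies the induction hypothesis to $\{n+d\mid\}:w^L$ and $\{n+d\mid\}:w^R$, and then factors out $n$ via the Translation Principle. The only differences are cosmetic: the paper first replaces $w$ by its canonical form $\{w^L\mid w^R\}$, where you keep $w$ in its given form (mentioning the Colon Principle as an alternative), and you make the base case $w=0$ explicit where the paper leaves it implicit.

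That said, the step you call ``a one-line instance of the Translation Principle'' is exactly where the argument --- yours and the paper's alike --- is not rigorous, and where the claimed identity can actually fail. The Translation Principle in its correct form (the Number Translation Theorem) requires that the game being translated not be equal to a number; here the game $\{d,\ \{d\mid\}:w^L \mid \{d\mid\}:w^R\}$ has value $\{d\mid\}:w$, which \emph{is} a number, so the principle as cited does not cover this application. Concretely, your base case $\{n+d\mid\}=n+\{d\mid\}$ is false once $n\leqslant -2$: with $n=-2$ and $d=\tfrac{1}{2}$, the simplicity rule gives $\bigl\{-\tfrac{3}{2}\mid\bigr\}=0$, while $-2+\bigl\{\tfrac{1}{2}\mid\bigr\}=-2+1=-1$. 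The failure propagates to the inductive step as well: $\bigl\{-\tfrac{3}{2}\mid\bigr\}:(-1)=\bigl\{-\tfrac{3}{2}\mid 0\bigr\}=-1$, whereas $-2+\bigl(\bigl\{\tfrac{1}{2}\mid\bigr\}:(-1)\bigr)=-2+\tfrac{3}{4}=-\tfrac{5}{4}$. So the lemma needs an additional hypothesis such as $n\geqslant -1$ --- in which case every option in sight is at least $d\geqslant 0$, the translation by $n$ never moves the relevant interval across $0$, and one can verify directly from the simplicity rule that translation commutes with the simplest-number computation --- or else the appeal to the Translation Principle must be replaced by such a direct simplicity-rule argument. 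Since you reproduced the paper's reasoning, this gap is inherited rather than introduced by you, but a complete proof has to address it.
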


\begin{proof}

Let $\{w^L\,|\,w^R\}$ be the canonical form of $w$.
We have
\begin{eqnarray*}
   && \{n+d\,|\,\}:\{w^L\,|\,w^R\} \\
   &=& \left\{n+d,\{n+d\,|\,\}:w^L\,\Big|\,\{n+d\,|\,\}:w^R\right\}\\
   &\underbrace{=}_{\text{induction}}& \left\{n+d,n+\left(\{d\,|\,\}:w^L\right)\,\Big|\,n+\left(\{d\,|\,\}:w^R\right)\right\}\\
      &\underbrace{=}_{\text{translation principle}}& n+\left\{d,\{d\,|\,\}:w^L\,\Big|\,\{d\,|\,\}:w^R\right\}\\
       &=& n+\left(\{d\,|\,\}:\{w^L\,|\,w^R\}\right).
\end{eqnarray*}
\end{proof}
\begin{lemma}\label{lem:translation} Let  $d$ be a dyadic rational, $0\leqslant d <1$ and $m$ and integer.
If  $m>0$, then
$$\{d\mid\} :  m=\{\{d\mid\} :  m-1\mid \}.$$ If $m<0$, then $$\{d\mid\} :  m=\{d\mid \{d\mid \}:(m+1)\}.$$
\end{lemma}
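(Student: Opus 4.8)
The plan is to prove both identities directly from the recursive definition of the ordinal sum, unwinding one step at a time and using that the base $\{d\mid\,\}$ has no Right option. Recall that in general $G:H=\{G^L,\,G:H^L\mid G^R,\,G:H^R\}$. For the base $G=\{d\mid\,\}$ we have $G^L=d$ and $G$ has \emph{no} Right option, so the ordinal sum simplifies to $\{d\mid\,\}:H=\{d,\,\{d\mid\,\}:H^L\mid \{d\mid\,\}:H^R\}$. The strategy is to feed in the canonical form of the integer $m$ and read off what this says.

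For the case $m>0$, I would write the canonical form of the positive integer $m$ as $m=\{m-1\mid\,\}$, which has a single Left option $m-1$ and no Right option. Substituting $H=m$ into the simplified formula above gives
$$\{d\mid\,\}:m=\{d,\,\{d\mid\,\}:(m-1)\mid\ \}.$$
The key observation is then that the Left option $d$ is dominated by $\{d\mid\,\}:(m-1)$, so it can be discarded, yielding $\{d\mid\,\}:m=\{\{d\mid\,\}:(m-1)\mid\,\}$ as claimed. To justify the domination I would invoke \cref{translation} (or argue inductively): since $0\le d<1$, an easy induction shows $\{d\mid\,\}:(m-1)\geqslant m-1\geqslant d$ whenever $m>0$, so the smaller Left option $d$ is irrelevant. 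Alternatively, one notes that $\{d\mid\,\}:(m-1)$ is reached from $\{d\mid\,\}:m$ by Left, so both are Left options and the larger one dominates by the standard domination principle for numbers.

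For the case $m<0$, I would use the canonical form $m=\{\,\mid m+1\}$, which has no Left option and a single Right option $m+1$. Here the simplified ordinal-sum formula reads
$$\{d\mid\,\}:m=\{d,\,\{d\mid\,\}:(m+1)^L\mid \{d\mid\,\}:(m+1)\}.$$
Since $m+1$ (viewed in canonical form when $m+1\le 0$, or handled via the base case $m+1=0$ where $\{d\mid\,\}:0=\{d\mid\,\}$) contributes no Left option beyond what is already recorded, the Left side of the sum is just $d$, giving $\{d\mid\,\}:m=\{d\mid \{d\mid\,\}:(m+1)\}$ directly. The main thing to check is that the Left options coming from $(m+1)^L$ do not add anything new and that $d$ is exactly the retained Left option, which is immediate from the canonical form of the negative integer.

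The step I expect to be the main obstacle is the domination argument in the $m>0$ case: one must be certain that $d$ is genuinely dominated by $\{d\mid\,\}:(m-1)$ so that it may be deleted from the Left options, and this requires knowing the sign/size of $\{d\mid\,\}:(m-1)$ relative to $d$. This is most cleanly handled by an induction on $m$ (with base case $m=1$, where $\{d\mid\,\}:1=\{d,\,\{d\mid\,\}:0\mid\,\}=\{d,\,\{d\mid\,\}\mid\,\}=\{\{d\mid\,\}\mid\,\}$ since $\{d\mid\,\}\geqslant d$), together with \cref{translation} to control the value. Everything else is a mechanical substitution of canonical forms into the definition of the ordinal sum, so the real content is this single monotonicity/domination fact.
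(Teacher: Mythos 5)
Your proposal is correct in substance, but it takes a leaner route than the paper. The paper expands the subordinate $m$ in a form whose options are all the intermediate integers, and therefore needs a general monotonicity claim --- $\{d\mid\}:k - \{d\mid\}:(k-1)\geqslant 0$ for every integer $k$, proved by a pairing (mimicking) strategy --- in order to discard all but the extreme option on each side in \emph{both} cases; it then separately shows $d\leqslant \{d\mid\}:(m-1)$ by a short strategy argument. You instead feed in the canonical form of $m$, which is legitimate because, by the Colon Principle, the value of an ordinal sum does not depend on the form of the subordinate. This makes the case $m<0$ purely definitional: $\{\,\mid m+1\}$ has no Left option, so the recursion immediately yields $\{d\mid \{d\mid\}:(m+1)\}$ with nothing to dominate. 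For $m>0$ it reduces everything to the single inequality $d\leqslant \{d\mid\}:(m-1)$, which your induction $\{d\mid\}:k\geqslant k$ delivers. What the paper's longer route buys is the reusable monotonicity fact; what yours buys is brevity, at the cost of leaning explicitly on form-independence of the subordinate.

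Three small repairs are needed. First, the chain $\{d\mid\}:(m-1)\geqslant m-1\geqslant d$ is false at $m=1$ whenever $d>0$; your base case does cover $m=1$ via $\{d\mid\}\geqslant d$ (indeed $\{d\mid\}=1>d$ by the simplicity rule), so the chain should be asserted only for $m\geqslant 2$, where $\{d\mid\}:(m-1)\geqslant m-1\geqslant 1>d$. Second, in the $m<0$ case your displayed formula lists a Left option $\{d\mid\}:(m+1)^L$; no such option exists, since the Left options of $G:H$ are $G^L$ and $G:H^L$, and here $H=m=\{\,\mid m+1\}$ has no Left option --- the display should read $\{d\mid\}:m=\{d\mid \{d\mid\}:(m+1)\}$ at once, which is the conclusion your prose in fact draws. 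Third, drop the ``alternative'' justification that the larger of the two Left options dominates: deciding \emph{which} option is larger is precisely the inequality to be proved, so that remark is circular, and your inductive argument is the one that carries the weight.
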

\begin{proof} Let $m$ be a positive integer.  By definition,
\begin{eqnarray*}
\{d\mid\} :  m &=& \{d, \{d\mid\} :  0, \{d\mid\} :  1,\ldots, \{d\mid\} :  (m-1)\,\mid\,\},\\
\{d\mid\} :  (-m) &=& \{d\, \mid \, \{d \mid \} :  0, \{d \mid \} :  -1,\ldots, \{d \mid \} :  (-m+1)\}.
\end{eqnarray*}

For any integer $k$, let  $G=\{d\mid\} :  k - \{d\mid\} :  (k-1)$. We claim that $G\geqslant 0$. Suppose $k>0$.  In $G$,
Right can only play in $-\{d\mid\} :  (k-1)$ and for any move he makes, Left has the corresponding move in $\{d\mid\} :  k$. This
results in $\{d\mid\} :  i-\{d\mid\} :  i = 0$.

Suppose $k\leq 0$. Now, in $G$,
Right has moves in both components but, again, Left has the corresponding move in the other component. This leaves a position equal to $0$. Thus $\{d\mid\} :  k - \{d\mid\} :  (k-1)\geqslant 0$ for all $m$.

This result shows that
\begin{eqnarray*}
\{d\mid\} :  m &=& \{d, \{d\mid\} :  (m-1)\,\mid \,\},\text{ if $m>0$, and}\\
\{d\mid\} :  m &=& \{d\,\mid\, \{d\mid\} :  (m+1)\}, \text{ if $m<0$.}
\end{eqnarray*}
Finally, if $m>0$, then $d\leqslant \{d\mid\} :  (m-1)$. This follows since, in $ \{d\mid\} :  (m-1)-d\geqslant 0$, Right
can only move to $ \{d\mid\} :  (m-1)-d'$ where $-d'>-d$. Left responds to $d-d'>0$.

Thus, for $m>0$, the canonical form of $\{d\mid\} :  m$ is $\{ \{d\mid\} :  (m-1)\,\mid \}$.\\
\end{proof}
\begin{corollary}
Let  $d$ be a dyadic rational, $0\leq d <1$ and $m$ an integer. If  $m$ is  positive, then
$\{d\mid\} :  m=(\{d\mid\} :  m-1):1$. If $m$ is negative then $\{d\mid\} :  m=(\{d\mid \}:(m+1)):-1$.
\end{corollary}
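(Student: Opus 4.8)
The plan is to read the statement off \cref{lem:translation}, of which this is a direct corollary: that lemma already supplies the canonical forms $\{d\mid\}:m=\{\{d\mid\}:(m-1)\mid\,\}$ for $m>0$ and $\{d\mid\}:m=\{d\mid\{d\mid\}:(m+1)\}$ for $m<0$, so all I must do is check that ordinally summing with $1$ (respectively $-1$) reproduces exactly these forms. Concretely, for $m>0$ I would put $X=\{d\mid\}:(m-1)$ and expand the ordinal sum against the form $1=\{0\mid\,\}$, using $X:0=X$, to get
$$X:1=\{X^L,\,X:0\mid X^R\}=\{X^L,\,X\mid X^R\}.$$
For $m<0$ I would put $Y=\{d\mid\}:(m+1)$ and expand against $-1=\{\,\mid 0\}$, again using $Y:0=Y$, to get
$$Y:(-1)=\{Y^L\mid Y^R,\,Y\}.$$

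It then remains to simplify these two forms, and here I would lean on the explicit option lists and the monotonicity already extracted in the proof of \cref{lem:translation}. In the positive case the base $\{d\mid\}$ has no Right option and the subordinate $m-1\geqslant 0$ has none in canonical form, so $X$ has no Right option; its Left options are precisely $d,\{d\mid\}:0,\ldots,\{d\mid\}:(m-2)$, each of which is $\leqslant\{d\mid\}:(m-1)=X$ by the inequalities $\{d\mid\}:k\geqslant\{d\mid\}:(k-1)$ and $d\leqslant\{d\mid\}:(m-1)$ proved there. Hence $X$ dominates every $X^L$, the form collapses to $X:1=\{X\mid\,\}=\{\{d\mid\}:(m-1)\mid\,\}$, and \cref{lem:translation} closes the positive case. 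Symmetrically, when $m<0$ the subordinate $m+1\leqslant 0$ contributes no Left option, so the sole Left option of $Y$ is $d$, while the Right options of $Y$ are $\{d\mid\}:0,\ldots,\{d\mid\}:(m+2)$, each $\geqslant\{d\mid\}:(m+1)=Y$; the freshly created Right option $Y$ therefore dominates all of them, the form collapses to $Y:(-1)=\{d\mid Y\}=\{d\mid\{d\mid\}:(m+1)\}$, and \cref{lem:translation} closes the negative case.

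The one obstacle is bookkeeping rather than a new idea: I must commit to the canonical form of the integer subordinate so that the literal option lists of $X$ and $Y$ are the ones above, and I must keep the direction of domination straight (the largest Left option and the smallest Right option survive). Both ingredients are exactly what \cref{lem:translation} and its proof provide, so no fresh inequality is needed. More conceptually, I could instead invoke associativity of the ordinal sum, $(G:H):K=G:(H:K)$, which is an identity of forms provable by a short induction on $K$; since a one-step expansion gives $n:1=n+1$ and $n:(-1)=n-1$ for every integer $n$, associativity yields $(\{d\mid\}:(m-1)):1=\{d\mid\}:((m-1):1)=\{d\mid\}:m$ and likewise for the negative case, the last equality using that equal subordinates give equal ordinal sums. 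I expect to present the first, self-contained argument as the proof and mention associativity only as a remark.
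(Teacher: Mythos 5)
Your main argument is correct and follows essentially the same route as the paper: the paper's proof is just the terse form of it, writing $(\{d\mid\}:(m-1)):1=\{\,\{d\mid\}:(m-1)\mid\,\}=\{d\mid\}:m$ for $m>0$ (and symmetrically $(\{d\mid\}:(m+1)):-1=\{d\mid \{d\mid\}:(m+1)\}=\{d\mid\}:m$ for $m<0$), leaving implicit exactly the points you spell out, namely that the relevant component has no option on the other side and that the remaining options are dominated via the inequalities established in the proof of \cref{lem:translation}. One caution about your closing aside: the claim that $n:1=n+1$ and $n:(-1)=n-1$ hold for \emph{every} integer $n$ is false---for $n<0$ in canonical form one gets $n:1=\{n\mid n+1\}=n+\tfrac12$, e.g.\ $(-1):1=\{-1\mid 0\}=-\tfrac12$---so the associativity shortcut is sound here only because the inner subordinates satisfy $m-1\geqslant 0$ and $m+1\leqslant 0$; if you keep that remark, restrict the identity to those cases.
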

\begin{proof}
If $m>0$, then
$$ (\{d\mid\} :  m-1):1 = \{\{d\mid\} :  m-1\mid \} = \{d\mid\} :  m.$$
If $m<0$, then
$$(\{d\mid \}:(m+1)):-1 = \{d\mid \{d\mid \}:(m+1)\} = \{d\mid\} :  m.
$$
\end{proof}

\begin{theorem}\label{thm:positive}
Let $d=_20.d_1d_2\ldots d_k$ and let $m$ be an integer.

\begin{enumerate}
\item If $m\geqslant 0$, then $\{d\mid \}:m=m+1$.
\item If $m<0$, then $\{d\mid \}:m=_20.d_1d_2d_3\ldots d_{j-1}1$,
where $j$ is the index of the $|m|$-th zero digit of the binary expansion of $d$.
\end{enumerate}

\end{theorem}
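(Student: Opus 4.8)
\noindent
The plan is to prove both parts by induction, turning the recursions of \cref{lem:translation} into evaluations of the simplicity rule \cref{simplicity1} and then checking the binary bookkeeping. Part 1 is the easy half: I would induct on $m\ge 0$. The base case $m=0$ is $\{d\mid\}:0=\{d\mid\}$, whose value is the simplest number exceeding $d$; since $0\le d<1$ this is $1=0+1$. For the step, \cref{lem:translation} gives $\{d\mid\}:m=\{\{d\mid\}:(m-1)\mid\}=\{m\mid\}$ by the induction hypothesis, and $\{m\mid\}=m+1$ since the simplest number above the integer $m$ is $m+1$.

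\noindent
The content is in Part 2, which I would prove by induction on $t:=|m|\ge 1$. For the base case $m=-1$, \cref{lem:translation} together with Part 1 gives $\{d\mid\}:(-1)=\{d\mid\{d\mid\}:0\}=\{d\mid 1\}$, and case 2 of \cref{simplicity1} evaluates this to $1-\frac{1}{2^i}=_2 0.\underbrace{1\ldots 1}_{i}$, where $i$ is the first zero digit of $d$. Because every digit of $d$ before position $i$ equals $1$, this number is exactly $0.d_1\ldots d_{i-1}1$ with $i$ the first zero, which is the claimed value for $|m|=1$. For the step with $t\ge 2$, put $d':=\{d\mid\}:(m+1)$; the induction hypothesis says $d'=_2 0.d_1\ldots d_{j'-1}1$, where $j'$ is the $(t-1)$-th zero digit of $d$. \cref{lem:translation} then reduces the target to $\{d\mid d'\}$, which I evaluate using case 3 of \cref{simplicity1}.

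\noindent
The one place that genuinely needs care—and the main obstacle—is checking that this last evaluation always falls into the \emph{second} subcase of case 3. Since $d'$ agrees with $d$ in positions $1,\ldots,j'-1$ and carries a $1$ in position $j'$ (a zero of $d$), the first index with $d_i=0$ and $d'_i=1$ is precisely $i=j'$; and then $d'=_2 0.d_1\ldots d_{i-1}1$, which is exactly the equality triggering the second subcase. That subcase returns $0.d_1\ldots d_{j-1}1$, where $j$ is the least zero digit of $d$ beyond position $i=j'$, namely the $t$-th zero digit—matching the claim for $|m|=t$. It remains only to observe that the infinite binary expansion of $d$ has infinitely many zero digits (all positions past its last $1$-bit), so the ``$|m|$-th zero digit'' is always defined, and to dispatch the degenerate value $d=0$ by the direct computation $\{0\mid\}:m=m+1$ for $m\ge 0$ and $\{0\mid\}:(-t)=\frac{1}{2^{t}}$.
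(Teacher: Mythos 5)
Your proposal is correct and follows essentially the same route as the paper: both parts are proved by induction using the recursions of \cref{lem:translation}, with \cref{simplicity1} evaluating the resulting forms $\{d\mid 1\}$ and $\{d\mid \{d\mid\}:(m+1)\}$, and the degenerate case $d=0$ handled separately. Your explicit verification that the induction step lands in the second subcase of case 3 of \cref{simplicity1} is a point the paper's proof states only implicitly, so if anything your write-up is slightly more careful on the binary bookkeeping.
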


\begin{proof}
First suppose $m>0$. We have  $\{d\mid \}:m=(\{d\mid \}:m-1):1$. Since $\{d\mid \}:0=1$, then, by induction, $(\{d\mid \}:m-1):1=m:1$. Finally, $m:1=m+1$.\\

Now suppose $m<0$.

If $d=0$, then the theorem states $\{0\mid\}:m = 2^m$. This follows easily by induction as follows. First, $\{0\mid \}:0=1$ and,
$\{0\mid \}:m=\{0\mid \{0\mid \}:m+1\}$.  By induction, $\{0\mid \}:m=\{0\mid 2^{m+1}\}$,
and since, by \cref{simplicity1}, $\{0\mid 2^{m+1}\}= 2^m$ then this part of the result is proved.\\

We may now assume that $d>0$.

If $m=-1$, then $\{d\mid \}:-1=\{d\mid \{d\mid \}\}=\{d\mid 1\}$. Now, by Theorem \ref{simplicity1},
$\{d\mid 1 \}=_20.d_1d_2d_3\ldots d_{j-1}1$, where $j$ is the index of the first $0$-bit of the binary expansion of $d$.\\

If $m<-1$, then, by induction,  $\{d\mid\} :  (m+1)=_20.d_1d_2d_3\ldots d_{j-1}1$, where $j$ is the index of the
 $|m+1|$-th zero digit of the binary expansion of $d$. Now $\{d\mid\} :  m=\{d\mid \{d\mid\} :  (m+1)\}$.  Again, by Theorem \ref{simplicity1}, the binary expansion of $\{d\mid \{d\mid\} :  (m+1)\}$ is obtained by replacing by ``1'' the first $0$-bit in the binary expansion of $d$, after the position $j$ (and the following digits are all zero). That bit is the $|m|$-th zero digit of the binary expansion of $d$, and this finishes the proof.
\end{proof}

\begin{observation}
One consequence of \cref{thm:positive} is  that, for $0\leqslant d<1$ and $m$ a positive integer,  $\{\,d\mid\,\}:m=\{\,d\mid\,\}+m$, that is, the ordinal sum coincides with the usual sum.
\end{observation}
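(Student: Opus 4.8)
The plan is to read the result off \cref{thm:positive} directly, after first pinning down the game value of the base $\{d\mid\,\}$. That value is the only quantity appearing in the statement which the surrounding material has not already computed, so I would begin there. With $0\leqslant d<1$ a number as the sole Left option and with no Right option, the simplicity rule identifies $\{d\mid\,\}$ as the simplest number strictly greater than $d$; since $d$ lies in $[0,1)$, the integer $1$ exceeds $d$ and is simpler than any non-integer, so $\{d\mid\,\}=1$ as a game value.

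With that in hand the computation is immediate. By part 1 of \cref{thm:positive}, for every integer $m\geqslant 0$ we have $\{d\mid\,\}:m=m+1$. On the other hand, the disjunctive sum $\{d\mid\,\}+m$ has value $1+m$ by the previous paragraph. Hence $\{d\mid\,\}:m=m+1=1+m=\{d\mid\,\}+m$, which is exactly the asserted coincidence of the ordinal sum with the ordinary sum. No induction or case analysis is required beyond what \cref{thm:positive} already supplies.

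The step that deserves a word of caution, rather than a genuine obstacle, is conceptual. Throughout this subsection the emphasis is that the literal form of the base, not merely its value, controls an ordinal sum (compare the example $\{0\mid 2\}:1\ne\{0\mid\,\}:1$ and \cref{th:g}). It is therefore worth stressing that the claim here is an equality of \emph{values}: although the base $\{d\mid\,\}$ is not in canonical form and is \textbf{not} the form $1$, its value equals $1$, and both $\{d\mid\,\}:m$ and $\{d\mid\,\}+m$ are being read as values. Once this is made explicit, the equality follows with no further work.
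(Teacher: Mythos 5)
Your proposal is correct and matches the paper's intended reasoning: the paper states this as an unproved observation following directly from Theorem~\ref{thm:positive}, and your argument—computing $\{d\mid\,\}=1$ by the simplicity rule and then comparing $m+1$ with $1+m$—is exactly the justification implicit there. Your cautionary remark distinguishing equality of values from equality of forms is also consistent with the paper's emphasis in that subsection.
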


\begin{example}
\[\left\{\frac{309}{512}\,\Big|\,\right\}:-3=_2\{0.10011\textbf{\textbf{0}}101\,\mid\, \}:-3=_20.100111=\frac{39}{64}.\]
\end{example}

The signed binary notation is more useful for game practice because of, as mentioned before, the correspondence $1$-`blue edge' and $\overline{1}$-`red edge'. The following theorem, concerning the use of signed binary representations, is presented without proof, since it is similar to the previous one.\\

\begin{theorem}\label{negative3}
Let $d$ be a dyadic rational such that $0<d<1$, and $1.\overline{1}d_2\ldots d_k$ its signed binary expansion. Let $m$ be a negative integer. The signed binary expansion of $\{d\mid\,\}:m$ is obtained in the following way:\\

Case 1: If the number of minus ones in the signed binary expansion of $d$ is larger than $|m|$, then the signed binary expansion of $\{d\mid\,\}:m$ is $1.\overline{1}d_2d_3\ldots d_{i-1}$, where $i$ is the index of the $(|m|+1)$-th $\overline{1}$-bit in the signed binary expansion of $d$.\\

Case 2: If the number of minus ones in the signed binary expansion of $d$ ($n$) is less or equal than $|m|$, then the signed binary expansion of $\{d\,\mid\,\}:m$ is $1.\overline{1}d_2d_3\ldots d_{k}1\underbrace{\overline{1}\,\overline{1}\ldots\overline{1}\,\overline{1}}_{|m|-n\,\,\overline{1}'s}$.
\end{theorem}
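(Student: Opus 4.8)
The plan is to prove \cref{negative3} by translating \cref{thm:positive} into signed binary notation, exploiting the correspondence between ordinary binary and signed binary expansions of a dyadic rational in $(0,1)$. First I would establish the dictionary: if $d =_2 0.d_1d_2\ldots d_k$ in ordinary binary with leading $1$-bit, then its signed binary form $1.\overline{1}d_2\ldots d_k$ is obtained by the standard identity $0.1c_2c_3\ldots =_2 1.\overline{1}c_2c_3\ldots$, i.e. promoting the leading $1$ to an integer $1$ and compensating with a $\overline{1}$ in the first fractional place. The crucial observation is how \emph{zero digits} in the ordinary expansion correspond to $\overline{1}$-\emph{digits} in the signed expansion: I would verify that the $0$-bits of $d$ in ordinary binary (which drive the formula in \cref{thm:positive}) are precisely the positions carrying $\overline{1}$ in the signed expansion, after accounting for the leading-bit shift. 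This is the combinatorial heart of the translation, and I expect it to be the step requiring the most care.

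With the dictionary in hand, the two cases of \cref{negative3} become direct restatements of \cref{thm:positive}(2). In that theorem, $\{d\mid\}:m$ for $m<0$ is obtained by locating the $|m|$-th zero digit of $d$ (at index $j$) and writing $0.d_1d_2\ldots d_{j-1}1$. Case~1 of \cref{negative3} is exactly the subcase where $d$ has \emph{enough} zero digits, i.e. more than $|m|$ of them, so that the required $|m|$-th zero occurs within the finite expansion. Translating $0.d_1\ldots d_{j-1}1$ into signed binary and re-indexing zeros as $\overline{1}$'s yields the claimed form $1.\overline{1}d_2\ldots d_{i-1}$, where $i$ is the index of the $(|m|+1)$-th $\overline{1}$-bit (the shift from $|m|$ to $|m|+1$ reflecting the leading $\overline{1}$ that the signed form introduces at position one).

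Case~2 handles the situation where $d$ has only $n \leqslant |m|$ zero-bits in its \emph{finite} ordinary expansion, so \cref{thm:positive} forces us to consult $0$-bits \emph{past} the last nonzero digit; these are the trailing zeros that the finite form suppresses but \cref{simplicity1} uses. Here each of the $|m|-n$ additional zeros lies beyond position $k$, and the formula appends a $1$ followed by $|m|-n$ copies of $\overline{1}$, giving $1.\overline{1}d_2\ldots d_k 1 \overline{1}\,\overline{1}\ldots\overline{1}$. I would confirm this arithmetically by checking that appending $1\overline{1}\cdots\overline{1}$ after index $k$ corresponds, in ordinary binary, to inserting the required $1$-bit at the position of the $|m|$-th (trailing) zero. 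Since the referenced theorem already supplies the full inductive argument, the remaining work is purely notational verification that the two expansions denote the same dyadic rational, which is why the authors—and I—would present it without re-deriving the induction.
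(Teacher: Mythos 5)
Your overall strategy is sound, and it is worth noting that the paper itself offers no written proof of \cref{negative3}: it is ``presented without proof, since it is similar to the previous one,'' meaning the authors intend the reader to re-run the induction behind \cref{thm:positive} (via \cref{lem:translation} and \cref{simplicity1}) directly in signed-binary notation. Your route is genuinely different: you keep \cref{thm:positive} as a black box and instead establish the dictionary between the ordinary and signed binary expansions of a dyadic in $(0,1)$, then transport the statement across it. This is a legitimate and arguably more economical reduction --- it avoids restating the simplicity rule in signed notation --- but it concentrates all of the risk in the dictionary, which must be stated exactly; and as written yours is not.

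The identity $0.1c_2c_3\ldots =_2 1.\overline{1}c_2c_3\ldots$ only promotes the leading bit and still leaves $0$'s among the fractional digits, whereas the expansions in \cref{negative3} have every digit equal to $1$ or $\overline{1}$. The correct correspondence, for $d =_2 0.b_1b_2\ldots b_{K-1}1$, is that the signed expansion of $d$ is $1.\overline{1}t_1t_2\ldots t_{K-1}$, where $t_i=1$ if $b_i=1$ and $t_i=\overline{1}$ if $b_i=0$: the trailing $1$-bit is dropped, every remaining bit shifts one place to the right, and the leading $\overline{1}$ corresponds to no zero-bit of $d$ at all. Consequently $n$, the number of $\overline{1}$'s, is one \emph{more} than the number of zero-bits in the finite ordinary expansion. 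Your Case 1 accounting respects this (that is exactly your shift from $|m|$ to $|m|+1$), but your Case 2 prose does not: there you identify $n$ with the number of ordinary zero-bits and assert that $|m|-n$ of the required zeros lie beyond position $k$, when the correct counts are $n-1$ and $|m|-n+1$ respectively. The formula you arrive at is nevertheless the right one --- the $|m|$-th zero of $d$ sits at ordinary position $K+|m|-n+1$, and converting $0.b_1\ldots b_K0\ldots01$ appends precisely one $1$ followed by $|m|-n$ copies of $\overline{1}$ --- so this is an expository off-by-one rather than a fatal flaw. It does, however, need to be repaired for the two cases to partition correctly: under your identification, the boundary situation in which $|m|$ equals the number of ordinary zero-bits would be sent to Case 2, when it in fact satisfies $n>|m|$ and belongs to Case 1 (truncation, not extension).
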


\begin{example}
\begin{eqnarray*}
\left\{\frac{173}{512}\Big|\,\right\}:-3&=_2&\{1.\overline{1}\,\overline{1}1\overline{1}1
\overline{\textbf{\textbf{1}}}11\overline{1}\,\mid\,\}=_21.\overline{1}\,\overline{1}1\overline{1}1=\frac{11}{32}.\\
\end{eqnarray*}

\end{example}

%

The last case that needs to be evaluated is when  $G=\{d\,|\,\}:(m+d')$, $m$ is an integer and $d$ and $d'$ are dyadic rationals between $0$ and $1$.

\begin{corollary}\label{positive}
Let $d$ be a dyadic rational such that $0\leqslant d<1$.
\begin{enumerate}
\item If $m$ is a positive integer, then $\{d\mid\}:m=m+1$.
\item If $m$ be a negative integer, then $\{0\mid\}:m=2^m$.
\item If $m$ be a negative integer and $d=_20.d_1d_2\ldots d_k$, $d\ne 0$,
then $\{d\mid\}:m=_20.d_1d_2d_3\ldots d_{j-1}1$, where $j$ is the index of the $|m|$-th zero digit of the binary expansion of $d$.
\end{enumerate}
\end{corollary}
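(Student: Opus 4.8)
The plan is to read off all three parts directly from \cref{thm:positive}, treating this corollary as a repackaging of that theorem together with one explicit evaluation of the $d=0$ instance.

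First I would dispatch Part 1. For a positive integer $m$ the claim $\{d\mid\}:m=m+1$ is exactly the conclusion of \cref{thm:positive}(1), which in fact holds for every $m\geqslant 0$; restricting to $m>0$ leaves nothing to prove. Likewise, Part 3 is verbatim \cref{thm:positive}(2): for negative $m$ and $d\ne 0$ the binary expansion of $\{d\mid\}:m$ is $0.d_1d_2\ldots d_{j-1}1$ with $j$ the index of the $|m|$-th zero bit of $d$. So neither Part 1 nor Part 3 needs a fresh argument.

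The only point meriting a line of justification is Part 2, the case $m<0$ with $d=0$. I would observe that it is already a degenerate instance of the Part 3 formula: when $d=0$ every bit $d_1,d_2,\ldots$ vanishes, so the $|m|$-th zero bit occurs at index $j=|m|$, and $0.d_1\ldots d_{j-1}1$ collapses to $0.\underbrace{0\cdots0}_{|m|-1}1=2^{-|m|}=2^m$. Alternatively I could simply invoke the $d=0$ induction already carried out inside the proof of \cref{thm:positive}, where $\{0\mid\{0\mid\}:(m+1)\}=\{0\mid 2^{m+1}\}=2^m$ is obtained from \cref{simplicity1}.

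There is essentially no obstacle: the corollary is a bookkeeping consolidation of \cref{thm:positive}, and the sole substantive check is the routine verification that the general binary formula specializes to $2^m$ when $d=0$. Were I aiming to make the statement self-contained I would reproduce that base case of the induction, but given the preceding theorem this is unnecessary.
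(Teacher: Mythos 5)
Your proposal is correct and matches the paper's own proof, which likewise disposes of the corollary in one line as a re-statement of \cref{thm:positive} (part 1 from its first clause, parts 2 and 3 from its second). Your explicit check that the $d=0$ case of the binary formula collapses to $2^m$ is a small bonus the paper leaves implicit (it is handled inside the proof of \cref{thm:positive} itself), but the route is the same.
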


\begin{proof}
These are re-statements, via \cref{lem:translation}, of \cref{thm:positive} for part 1, and \cref{thm:positive}
for parts 2 and 3.
\end{proof}

\begin{theorem}[main result for numbers]\label{main2}
Consider $G=\{n+d\,|\,\}:(m+d')$ where $0\leqslant d,d'<1$ are dyadics, and $n,m\in\mathbb{Z}$. Let $\frac{k}{2^j}$ be the simplest form of $\{d\mid\}:m$. Then, $$G=n+\frac{k}{2^j}+\frac{d'}{2^j}.$$
\end{theorem}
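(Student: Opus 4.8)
The plan is to peel off the integer part $n$ of the base and then evaluate the reduced ordinal sum $\{d\mid\}:(m+d')$ by induction on the number $\ell$ of binary digits of the fractional part $d'$. For the first step, \cref{translation} applies verbatim with subordinate $w=m+d'$ (a number), giving $G=\{n+d\mid\}:(m+d')=n+\bigl(\{d\mid\}:(m+d')\bigr)$. Hence it suffices to prove $\{d\mid\}:(m+d')=\frac{k+d'}{2^j}=\frac{k}{2^j}+\frac{d'}{2^j}$, where $\frac{k}{2^j}$ is the simplest form of $\{d\mid\}:m$; adding $n$ back then yields the stated formula.

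Before the induction I would record one key fact, a \emph{tiling property}: if $\frac{k}{2^j}$ is the simplest form of $\{d\mid\}:m$, then $\{d\mid\}:(m+1)=\frac{k+1}{2^j}$, i.e.\ consecutive integer subordinates produce dyadics adjacent at scale $2^{-j}$. For $m\geqslant 0$ this is immediate from \cref{thm:positive}, since both sides are successive integers. For $m<0$ it follows from the explicit binary description in \cref{thm:positive}: writing $\{d\mid\}:m=_2 0.d_1\ldots d_{j-1}1$ (a $1$ placed at the $|m|$-th zero of $d$) and $\{d\mid\}:(m+1)$ the analogous string truncated at the $(|m|-1)$-th zero, a short carry computation shows their difference is exactly $\frac{1}{2^j}$. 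I expect this carry bookkeeping to be the main obstacle, since it is where the combinatorics of \cref{thm:positive} must be converted into a clean statement about denominators.

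With the tiling property in hand the induction is routine. The base case $d'=0$ is the definition of $\frac{k}{2^j}$. For the step, write $w=m+d'$ in canonical form $\{w^L\mid w^R\}$; since $w$ is a non-integer dyadic of fractional length $\ell\geqslant 1$, it has the single options $w^L=m+d'^-$ and $w^R=m+d'^+$ with $d'^\pm=d'\pm 2^{-\ell}$, both of fractional length $<\ell$ (and possibly $d'^+=1$, in which case $w^R=m+1$). Unfolding the ordinal-sum recursion gives $\{d\mid\}:w=\{\,d,\ \{d\mid\}:w^L\mid \{d\mid\}:w^R\,\}$, and applying the induction hypothesis to both options — invoking the tiling property precisely to handle the boundary case $d'^+=1$ where the integer part advances — turns this into $\bigl\{\,d,\ \tfrac{k+d'^-}{2^j}\ \big|\ \tfrac{k+d'^+}{2^j}\,\bigr\}$.

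Finally I would simplify this explicit position. Since $\frac{k+d'^-}{2^j}\geqslant \frac{k}{2^j}=\{d\mid\}:m>d$ (the inequality $\frac{k}{2^j}>d$ again reading off from \cref{thm:positive}), the option $d$ is dominated and drops out, leaving $\bigl\{\tfrac{k+d'^-}{2^j}\mid \tfrac{k+d'^+}{2^j}\bigr\}$. Setting $c=\frac{k+d'}{2^j}$, the two options are exactly $c-2^{-(j+\ell)}$ and $c+2^{-(j+\ell)}$; because $d'$ has length exactly $\ell$, the numerator of $c$ at scale $2^{j+\ell}$ is odd, so the two options are consecutive multiples of $2^{-(j+\ell-1)}$ and $c$ is the unique simplest dyadic strictly between them. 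The Simplicity Rule (the characterization preceding \cref{simplicity1}) then gives $\{d\mid\}:w=c=\frac{k+d'}{2^j}$, which completes the induction and, together with the translation step, the theorem. The only routine verifications along the way are the domination of $d$ and the inequality $\frac{k}{2^j}>d$; the genuine work is concentrated in the tiling property.
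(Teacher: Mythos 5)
Your proposal is correct, but it takes a genuinely different route from the paper. After the common first step (peeling off $n$ via \cref{translation}), the paper proceeds by case analysis on $d=0$ versus $d>0$ and $m\geqslant 0$ versus $m<0$: the easy cases follow from \cref{roode} and \cref{positive}, while the hard case ($d>0$, $m<0$) is handled by constructing an explicit \textsc{blue-red hackenbush} string $H$ whose signed binary expansion realizes $\frac{k}{2^j}+\frac{d'}{2^j}$ (via \cref{negative3}) and then proving $G'-H$ is a $\mathcal{P}$-position by a Tweedledee--Tweedledum pairing strategy, using \cref{th:g} to replace $G'$ by a convenient form. You instead run an induction on the binary length of $d'$, resting on your ``tiling property'' $\{d\mid\}:(m+1)=\frac{k+1}{2^j}$ (which is true, and your carry computation from \cref{thm:positive} does verify it: between the $(|m|-1)$-th and $|m|$-th zeros of $d$ all digits are $1$, so the two truncations differ by exactly $2^{-j}$), then unfold the ordinal-sum recursion on the canonical form of the subordinate, remove the dominated option $d$ (valid since $\{d\mid\}:m>d$ by \cref{thm:positive}), and finish with the Simplicity Rule, noting the two options are consecutive multiples of $2^{-(j+\ell-1)}$ with midpoint $\frac{k+d'}{2^j}$. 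Your argument buys uniformity and self-containedness: it needs no split between $|m|>n$ and $|m|\leqslant n$, no strategic $\mathcal{P}$-position verification, and no reliance on \cref{negative3}, which the paper states without proof; everything reduces to \cref{thm:positive}, which is proved. What the paper's approach buys is an explicit \textsc{hackenbush}-string realization of the value, reinforcing the signed-binary/edge correspondence that the paper emphasizes for game practice, and a proof style (exhibit a string, pair up moves) consonant with the rest of the paper.
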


\begin{proof}

By \cref{translation}, $\{n+d\,|\,\}:(m+d')=n+\left(\{d\,|\,\}:(m+d')\right)$, so we only need to analyze $G'=\{d\,|\,\}:(m+d')$, using the fact that $G=n+G'$.\\

Case 1: $d=0$ and $m\geqslant 0$.\\

We have that $G'$ is $\{0\,|\,\}:(m+d')$, and $\{0\,|\,\}$ is the canonical form of $1$. Therefore, by Theorem~\ref{roode},
  $G'=\{0\,|\,\}:(m+d')=1+m+d'$. By \cref{positive}, $\{d\mid\}:m=m+1=\frac{m+1}{2^0}$, $G'=\frac{m+1}{2^0}+\frac{d'}{2^0}$, and the theorem holds.\\

Case 2: $d=0$ and $m<0$.\\

We have that $G'$ is $\{0\,|\,\}:(m+d')$, and $\{0\,|\,\}$ is the canonical form of $1$. Therefore, by Theorem~\ref{roode},
  $$G'=\{0\,|\,\}:(m+d')=1+\frac{1}{2^{|m|}}(1-2^{|m|}+d')=\frac{1}{2^{|m|}}+\frac{d'}{2^{|m|}}.$$ By \cref{positive}, $\{d\mid\}:m=\frac{1}{2^{|m|}}$, $G'=\frac{1}{2^{|m|}}+\frac{d'}{2^{|m|}}$, and the theorem holds.\\

Case 3: $d>0$ and $m\geqslant 0$.\\

By \cref{positive} part 1, $G'=m+d'+1$. By \cref{positive} part 2, $\{d\mid\}:m=\frac{m+1}{2^{0}}$, $G'=\frac{m+1}{2^{0}}+\frac{d'}{2^{0}}$, and the theorem holds.\\

Case 4: $d>0$ and $m<0$.\\

This is the hardest case. In order to prove it we will construct a {\sc blue-red hackenbush} string $H$ whose value is $\frac{k}{2^j}+\frac{d'}{2^j}$ (Part 1). We will then prove that $G'-H$ is a $\mathcal{P}$-position (Part 2).\\

(Part 1) Let $\underbrace{1.\overline{1}1\ldots1\overline{1}1\ldots1\overline{1}\ldots\overline{1}1\ldots1}_{n\,\,\overline{1}'s}$ be the signed binary expansion of $d$. Since $0<d<1$, the first digit after the binary point is $\overline{1}$. Also, assume that this expansion has $n$ $\overline{1}$'s.\\

Let $1.\overline{1}d'_2d'_3\ldots d'_{w}$ be the signed binary expansion of $d'$. Since $0<d<1$, the first digit after the binary point is $\overline{1}$.\\

Consider the hardest case $|m|>n$. By Theorem \ref{negative3}, we know that $$\underbrace{1.\overline{1}1\ldots1\overline{1}1\ldots1\overline{1}\ldots\overline{1}1\ldots1}_{d\,(n\,\overline{1}'s)}1\underbrace{\overline{1}\,\overline{1}\ldots\overline{1}\,\overline{1}}_{|m|-n\,\,\overline{1}'s}$$ is the signed binary expansion of the game value of $\{d\mid\}:m$. The hypothesis of the current theorem states that this is $\frac{k}{2^j}$. Hence, there are $j$ binary places.\\

Now, the game value of the following {\sc blue-red hackenbush} string $H$ is $\frac{k}{2^j}+\frac{d'}{2^j}$. That happens because the added rightmost part $d'$ is shifted by $j$ binary places.

$$\underbrace{\underbrace{1.\overline{1}1\ldots1\overline{1}1\ldots1\overline{1}\ldots\overline{1}1\ldots1}_{d\,(n\,\overline{1}'s)}1\underbrace{\overline{1}\,\overline{1}\ldots\overline{1}\,\overline{1}}_{|m|-n\,\,\overline{1}'s}}_{\{d\mid\}:m=\frac{k}{2^j}\,\,(|m|\,\overline{1}'s)}\underbrace{1\overline{1}d'_2d'_3\ldots d'_{w}}_{shape\,of\,d'}=\frac{k}{2^j}+\frac{d'}{2^j}$$

(Part 2) In order to finish the proof, we have to show that $G'-H$ is a $\mathcal{P}$-position. By Theorem \ref{th:g}, we can use the following game form of $G'$, which also uses {\sc blue-red hackenbush} strings. The subordinate is a {\sc blue-red hackenbush} string whose value is $m+d'$.

$$G'=\{\underbrace{1.\overline{1}1\ldots1\overline{1}1\ldots1\overline{1}\ldots\overline{1}1\ldots1}_{d\,(n\,\overline{1}'s)}\,|\,\}:\underbrace{\overline{1}\ldots\overline{1}}_{(|m|\,\overline{1}'s)}\underbrace{1\overline{1}d'_2d'_3\ldots d'_{w}}_{shape\,of\,d'} $$

Let us verify that $G'-H=0$, that is, let us check that
$$\{\underbrace{1.\overline{1}1\ldots1\overline{1}1\ldots1\overline{1}\ldots\overline{1}1\ldots1}_{d\,(n\,\overline{1}'s)}\,|\,\}:\underbrace{\overline{1}\ldots\overline{1}}_{(|m|\,\overline{1}'s)}\underbrace{1\overline{1}d'_2d'_3\ldots d'_{w}}_{shape\,of\,d'}$$

\vspace{-0.8cm}
$$+$$

\vspace{-0.6cm}
$$\underbrace{\underbrace{\overline{1}.1\overline{1}\ldots\overline{1}1\overline{1}\ldots\overline{1}1\ldots1\overline{1}\ldots\overline{1}}_{-d\,(n\,1's)}\overline{1}\underbrace{1\,1\ldots1\,1}_{|m|-n\,\,1's}}_{\{\mid-d\}:(-m)\,\,(|m|\,1's)}\underbrace{\overline{1}1\overline{d'_2}\,\overline{d'_3}\ldots \overline{d'_{w}}}_{shape\,of\,-d'}$$
is a $\mathcal{P}$-position.\\

First, there is a correspondence between the moves in the shapes of $d'$ and $-d'$. Also, there is a correspondence between Right moves in the $|m|$ $\overline{1}$'s of the subordinate of the upper component and Left moves in the ones of\linebreak $\{\mid-d\}:(-m)$ in the bottom component. Regarding those correspondences, there is a Tweedledee-Tweedledum strategy.\\

Second, if Left moves to $1.\overline{1}1\ldots1\overline{1}1\ldots1\overline{1}\ldots\overline{1}1\ldots1=d$ in the upper component (entering the base), Right answers by removing the $\overline{1}$ immediately after the shape of $-d$ in the bottom component, and vice-versa.\\

Third, if Right removes any $\overline{1}$ of the shape of $-d$ in the bottom component, Left answers with $1.\overline{1}1\ldots1\overline{1}1\ldots1\overline{1}\ldots\overline{1}1\ldots1$ (entering the base) in the upper component, and wins.\\

Since the second player wins, $G'-H\in\mathcal{P}$, and $G'=H=\frac{k}{2^j}+\frac{d'}{2^j}$.
\end{proof}

\vspace{0.3cm}
\begin{observation}
Essentially, if $m+d'\geqslant 0$, the ordinal sum $\{d\,|\,\}:(m+d')$ is the sum $\{d\,|\,\}+m+d'$; if, instead,  $m+d'< 0$, Corollary \ref{positive} is needed.
\end{observation}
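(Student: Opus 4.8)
The plan is to derive the observation as an immediate specialization of Theorem~\ref{main2}, the only extra ingredient being the elementary remark that, since $0\leqslant d'<1$ and $m\in\mathbb{Z}$, the sign condition $m+d'\geqslant 0$ is equivalent to the integer condition $m\geqslant 0$. First I would record that $\{d\,|\,\}$ has value $1$ for every $0\leqslant d<1$: with no Right option its value is the simplest number exceeding $d$, which is the integer $1$. Thus the ``sum'' appearing in the statement is just the number $\{d\,|\,\}+m+d'=1+m+d'$, and the entire content of the observation is that the ordinal sum $\{d\,|\,\}:(m+d')$ collapses to this number exactly when $m+d'\geqslant 0$, whereas for $m+d'<0$ a genuine rescaling occurs.

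For the first half I would treat the regime $m\geqslant 0$ (equivalently $m+d'\geqslant 0$). By Theorem~\ref{thm:positive}(1) we have $\{d\,|\,\}:m=m+1$, an integer, so its simplest form $\tfrac{k}{2^{j}}$ has $k=m+1$ and $j=0$. Feeding this into Theorem~\ref{main2} with $n=0$ gives
\[
\{d\,|\,\}:(m+d')=\frac{m+1}{2^{0}}+\frac{d'}{2^{0}}=1+m+d'=\{d\,|\,\}+m+d',
\]
which is precisely the asserted coincidence with ordinary addition; the point is that the denominator $2^{j}=1$ is exactly what prevents any rescaling of $d'$, so the disjunctive sum of the number $\{d\,|\,\}=1$ with the number $m+d'$ is recovered verbatim.

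For the second half I would observe that when $m<0$ (equivalently $m+d'<0$) the value $\{d\,|\,\}:m$ is no longer $m+1$ but is governed by Corollary~\ref{positive}(2)--(3): it equals $2^{m}$ when $d=0$, and otherwise is the dyadic obtained by flipping the $|m|$-th zero digit of the binary expansion of $d$. In every such subcase the simplest form $\tfrac{k}{2^{j}}$ has denominator $2^{j}$ with $j>0$, so Theorem~\ref{main2} rescales the tail to $\tfrac{d'}{2^{j}}$ and the ordinal sum no longer agrees with the naive number $1+m+d'$. This is exactly why Corollary~\ref{positive} is needed here, namely to locate the relevant zero digit and thereby pin down $j$; the formula genuinely departs from ordinary addition.

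The only step requiring a word of care is the sign equivalence $m+d'\geqslant 0\iff m\geqslant 0$: because $0\leqslant d'<1$, an integer $m\leqslant -1$ forces $m+d'<m+1\leqslant 0$, while $m\geqslant 0$ forces $m+d'\geqslant 0$. This dichotomy is what lets the integer-sign hypotheses of Theorem~\ref{thm:positive} and Corollary~\ref{positive} be repackaged as the single condition on $m+d'$ in the observation. There is no genuine analytic obstacle remaining, since the substantive work has already been carried out in Theorem~\ref{main2}; the role of the observation is interpretive, and the proof is essentially this bookkeeping.
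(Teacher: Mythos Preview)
Your argument is correct and is precisely the intended reading: the paper states this as an unproved Observation, and your derivation---reducing $m+d'\geqslant 0$ to $m\geqslant 0$ via $0\leqslant d'<1$, then invoking Theorem~\ref{thm:positive}(1) to force $j=0$ in Theorem~\ref{main2}---is the natural justification. There is nothing to compare, since the paper offers no proof of its own here; your write-up simply makes explicit the bookkeeping the authors left implicit.
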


\subsection{Determination of the game value of a \sc{clockwise}\\ \sc{blue-red hackenbush} position}
Consider again the \textsc{clockwise blue-red hackenbush} position exhibited in Figure \ref{fig:trunk1}. In order to compute its game value, let us compute first the game value of the subposition presented in Figure \ref{fig: subposition}.

\begin{figure}[hbt]
\begin{center}
\includegraphics[scale=0.9]{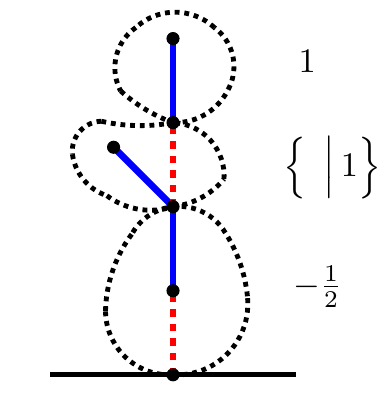}
\caption{A relevant subposition.}\label{fig: subposition}
\end{center}
\end{figure}

We have to determine the value of $-\frac{1}{2}:\left(\{\,|\,1\}:1\right)$. In order to compute $\{\,|\,1\}:1$, we need to position to be in the correct form to apply Theorem~\ref{main2}. Hence, we instead use $\{-1\,|\,\}:-1$ and will negate the resulting value. 

Consider $\left(\{-1\,|\,\}:-1\right)$. By Theorem \ref{main2}, since $n=-1$, $d=0$, $m=-1$, $d'=0$, and $\{d\,|\,\}:m=\frac{1}{2}$, we have $\{-1\,|\,\}:-1=-1+\frac{1}{2}=-\frac{1}{2}$. Hence, $\{\,|\,1\}:1=\frac{1}{2}$.\\

Finally, using van Roode's evaluation, $-\frac{1}{2}:\left(\{\,|\,1\}:1\right)=-\frac{1}{2}:\frac{1}{2}=-\frac{3}{8}$.\\

Regarding the \textsc{clockwise blue-red hackenbush} position exhibited in Figure \ref{fig:trunk1}, we have the situation presented in Figure \ref{fig: position}.\\

\begin{figure}[hbt]
\begin{center}
\includegraphics[scale=0.9]{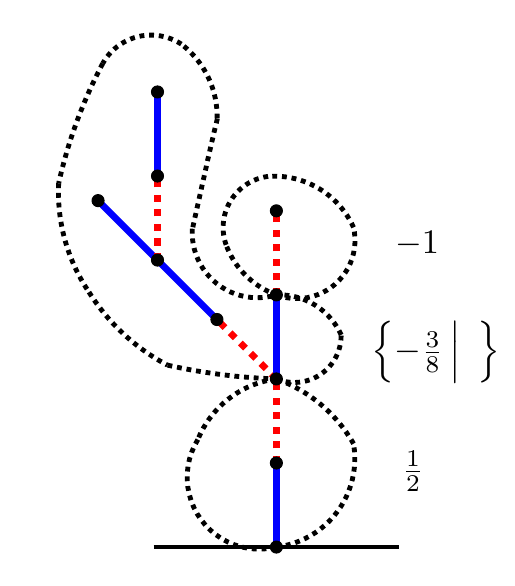}
\caption{Figure \ref{fig:trunk1} revisited.}\label{fig: position}
\end{center}
\end{figure}

We have to determine the value of $\frac{1}{2}:\left(\{-\frac{3}{8}\,\,|\,\}:-1\right)$; we start with $\{-\frac{3}{8}\,\,|\,\}:-1$. To apply Theorem~\ref{main2}, to find the value of $\{-\frac{3}{8}\,\,|\,\}:-1$, we first rewrite the expression as $\left\{-1+(\frac{5}{8})\,\,\big|\,\right\}:-1$. We observe that $n=-1$, $d=\frac{5}{8}$, $m=-1$, $d'=0$. By Theorem \ref{thm:positive}, $$\{d\,|\,\}:m=_2\{0.1\textbf{0}1\,\mid\, \}:-1=_20.11=\frac{3}{4}.$$ Now using Theorem~\ref{main2}, we have $\{-\frac{3}{8}\,\,|\,\}:-1=-1+\frac{3}{4}=-\frac{1}{4}$.\\

Using again van Roode's evaluation, $\frac{1}{2}:\left(\{-\frac{3}{8}\,\,|\,\}:-1\right)=\frac{1}{2}:-\frac{1}{4}=\frac{7}{16}$. This is the game value of the proposed position.\\

\noindent
\textbf{Exercise:} Verify that the game value of the \textsc{clockwise blue-red hackenbush} position exhibited in Figure \ref{fig:trunk2} is given by $$\{-2\,|\,\}:\left(-1:\left(\{-1\,|\,\}:-1\right)\right)=-1\frac{1}{4}.$$

\section{{\sc domino shave}}\label{sec:DominoShave}
We first find a normalized version of \textsc{domino shave} and then show that
this is equivalent to \textsc{clockwise hackenbush} by giving a bijection between the positions. We then
note which selection of dominoes give rise to games already in the literature. As well, we show that Hetyei's Bernoulli game is a subset of \textsc{stirling shave}. 

\subsection{\textit{Normalized} {\sc domino shave}}

Let $D$ be a \textsc{domino shave} position  $d_1,d_2,\ldots,d_k$. We \textit{normalize} the string using the following algorithm.\\

\begin{enumerate}
\item  Set $s=1$ and $p=1$.
\item  In the right-most consecutive line of blue, green and red dominoes, let $E_s$ be the set of  indices of the dominoes that can be played. 

  Consider the dominoes with indices in $E_s$. Starting at the left (least index) domino:
  \begin{itemize}
  \item if it is blue, then replace it by $(p,p+1)$ coloured aqua;
  \item if it is red, replace it by $(p+1,p)$, coloured pink;
\item  if it is green then replace it by $(p,p)$, coloured emerald.
\end{itemize}
Repeat with the  blue, red or green domino of least index in $E_s$. When all dominoes in $E_s$ have been replaced go to step (3).
\item Set $s:=s+1$ and $p:=p+2$.  If there are any blue, red or green dominoes, repeat step 1. If not then recolour the aqua dominoes blue, the pink dominoes red, and the emerald dominoes green and stop.
\end{enumerate}

\begin{example} Let $G=(2,4)(7,3)(1,2)(4,4)(3,2)$. The steps of the algorithm are shown in Table~\ref{tbl: conversion}, where a change of colour is indicated by $[a,b]$.
\begin{table}[ht]
\caption{Conversion to Normalized \textsc{domino shave}}\label{tbl: conversion}
\begin{center}
\begin{tabular}{|c|c|c|c|}\hline
$(s,p)$&Old Line&$E_s$&New Line\\
\hline\hline
$(1,1)$&$(2,4)(7,3)(1,2)(4,4)(3,2)$&$(1,2)(3,2)$&$(2,4)(7,3)[1,2](4,4)[2,1]$\\ \hline
$(2,3)$&$(2,4)(7,3)[1,2](4,4)[2,1]$&$(4,4)$&$(2,4)(7,3)[1,2][3,3][2,1]$\\ \hline
$(3,5)$&$(2,4)(7,3)[1,2][3,3][2,1]$&$(2,4)(7,3)$&$[5,6][6,5][1,2][3,3][2,1]$\\ \hline
$(4,7)$&$[5,6][6,5][1,2][3,3][2,1]$&&$(5,6)(6,5)(1,2)(3,3)(2,1)$\\ \hline
\end{tabular}
\end{center}
\label{default}
\end{table}%
\end{example}

The partition of the indices into $E_1, E_2,\ldots$ is independent of the normalization. It does point to a very important result.

\begin{lemma} Let $f$ be the largest index in $E_a$, $a>1$. The domino $d_{f+1}$ prevents every domino in $E_a$ from being played.
\end{lemma}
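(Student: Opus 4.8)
I would reduce the statement to a single numerical inequality and then prove that inequality by tracking how the blocks evolve through the normalization. For a domino $d_i=(l_i,r_i)$ write $\mu_i:=\min\{l_i,r_i\}$ for its smaller spot. The first thing I would record is a reformulation of playability: a blue or green $d_i$ is a legal Left move (and a red or green $d_i$ a legal Right move) in a line exactly when $\mu_i\leqslant \mu_j$ for every $j\geqslant i$ of that line. Indeed, for a blue or green domino $l_i=\mu_i$ and for a red or green domino $r_i=\mu_i$, and the playability conditions are precisely $\mu_i\leqslant l_j$ and $\mu_i\leqslant r_j$ for all $j\geqslant i$. Consequently, inside the right-most uncolored block $C_a=\{b,\dots,e\}$ processed at round $a$, the set $E_a$ consists exactly of the suffix minima of $C_a$ (the indices $i$ with $\mu_i\leqslant\mu_j$ for all $i\leqslant j\leqslant e$). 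Since the last index $e$ of any line is trivially a suffix minimum, $e\in E_a$, whence $f=\max E_a=e$; moreover the last index $k$ of the whole position lies in $E_1$, so for $a>1$ we have $e<k$ and $d_{f+1}=d_{e+1}$ genuinely exists. Finally, ``$d_{e+1}$ prevents $d_i$ from being played'' means exactly that the right-neighbour $d_{e+1}$ violates the playability condition for $d_i$, i.e.\ $\mu_i>\mu_{e+1}$. So the whole lemma reduces to showing
\[
\mu_{e+1}<\mu_i\quad\text{for every } i\in C_a.
\]

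Next I would locate the round at which the right boundary of the block is created. Colouring is monotone --- once a domino is recoloured it leaves the blue/green/red pool forever --- so every index of $C_a$, being uncoloured at round $a$, is uncoloured at every round $\leqslant a$. Let $s<a$ be the round with $e+1\in E_s$, and let $B_s$ be the block processed at round $s$. Because $b,\dots,e,e+1$ are consecutive and all uncoloured at the start of round $s$, they lie in a common block; and since $e+1\in E_s\subseteq B_s$, that common block is $B_s$, so $C_a\cup\{e+1\}\subseteq B_s$. Now $e+1\in E_s$ says that $e+1$ is a suffix minimum of $B_s$, i.e.\ $\mu_{e+1}\leqslant\mu_j$ for all $j\in B_s$ with $j\geqslant e+1$, whereas each $i\in C_a$ is uncoloured at round $s$, hence $i\notin E_s$, so $i$ is \emph{not} a suffix minimum of $B_s$ and therefore $\mu_i>\min\{\mu_j: j\in B_s,\ j>i\}$.

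The inequality then follows by a downward induction on $i$ from $e$ to $b$, proving simultaneously that $\min\{\mu_j: j\in B_s,\ j>i\}=\mu_{e+1}$ and $\mu_i>\mu_{e+1}$. For the base case $i=e$, the suffix-minimum property of $e+1$ in $B_s$ gives $\min\{\mu_j:j\in B_s,\ j>e\}=\mu_{e+1}$, and $e\notin E_s$ forces $\mu_e>\mu_{e+1}$. For the inductive step, $i+1\in C_a\subseteq B_s$, so one splits $\min\{\mu_j:j\in B_s,\ j>i\}=\min\bigl(\mu_{i+1},\ \min\{\mu_j:j\in B_s,\ j>i+1\}\bigr)$ and applies the induction hypothesis to $i+1$ to see both terms equal $\mu_{e+1}$; then $i\notin E_s$ gives $\mu_i>\mu_{e+1}$. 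This establishes $\mu_{e+1}<\mu_i$ for all $i\in C_a\supseteq E_a$, which is exactly the lemma.

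The main obstacle, and the reason the hypothesis $a>1$ is essential, is pinning down that the blocker is precisely the immediate right-neighbour $d_{e+1}$ rather than merely some smaller domino further to the right. In the first round $e+1$ would be a suffix minimum of the whole line, but for $a>1$ this can fail --- there may be strictly smaller spots further right that have already been recoloured --- so one cannot simply invoke global suffix-minimality. The crux is therefore the identification of the single round $s$ and block $B_s$ in which $e+1$ is a suffix minimum while all of $C_a$ still sits (uncoloured) inside $B_s$; once the indices $b,\dots,e,e+1$ are confined to one block $B_s$, the downward induction is routine.
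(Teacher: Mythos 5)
Your proof is correct, and it takes a genuinely different route from the paper's. Both arguments reduce playability to suffix-minimality of the quantities $\mu_i=\min\{l_i,r_i\}$, but the paper anchors its argument at round $a-1$: it asserts that $E_a$ is a consecutive block $\{g,g+1,\ldots,f\}$, that $f+1\in E_{a-1}$, and that some blocker $d_j$ of $d_g$ with $j>g$ lies in $E_{a-1}$, and then chains $\mu_i\geqslant\mu_g>\mu_j\geqslant\mu_{f+1}$. You instead anchor at the round $s$ in which $d_{f+1}$ itself was recoloured, confine $C_a\cup\{f+1\}$ inside the single block $B_s$, and run a downward induction there; this also yields a stronger conclusion, namely that $d_{f+1}$ prevents every domino of the whole line $C_a$, not only those in $E_a$. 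What your route buys is robustness, because the paper's anchoring claims can actually fail: for five green dominoes $(2,2)(5,5)(4,4)(1,1)(3,3)$ the algorithm gives $E_1=\{4,5\}$ and $E_2=\{1,3\}$, so $E_a$ need not be consecutive; for green dominoes with spots $3,1,2,1$ it gives $E_1=\{2,4\}$, $E_2=\{3\}$, $E_3=\{1\}$, so for $a=3$ the domino $d_{f+1}=d_2$ was recoloured in round $1$ rather than round $a-1$; and for spots $3,1,5,1$ no blocker of $d_g=d_1$ lies in $E_{a-1}=\{3\}$ at all, even though the lemma still holds. Your identification of the round $s$ with $f+1\in E_s$, together with the observation that all of $C_a$ is still uncoloured, hence non-suffix-minimal, inside $B_s$, is exactly what makes the inequality $\mu_i>\mu_{f+1}$ go through in such cases; the paper's chain is shorter when its assertions hold, but your argument covers the general situation.
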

\begin{proof} Let $g$ be the smallest index of the dominoes in $E_a$. This gives $E_a=\{g, g+1, \ldots,f\}$.
 After $d_{f+1}$ has been played then every domino $d_i$, $g\leqslant i\leqslant f$ is playable. Thus
 \[\min\{l_f,r_f\}\geqslant \min\{l_{f-1},r_{f-1}\}\geqslant \ldots\geqslant \min\{l_g,r_g\}. \]
 Since $g\in E_a$, there exists $d_j$, $j>g$, and $j\in E_{a-1}$ which prevents $d_g$ from being played. (If no such domino exists then $g\in E_{a-1}$.)
 We may assume that $j$ is the least index. Thus $\min\{l_g,r_g\} >\min\{l_j,r_j\}$. Since  $f+1,j\in E_{a-1}$ and $f+1\leqslant j$ then
 $d_j$ does not prevent $d_{f+1}$ being played. This gives $\min\{l_j,r_j\} >\min\{l_{f+1},r_{f+1}\}$. Combining the inequalities yields
$ \min\{l_i,r_i\}>\min\{l_{f+1},r_{f+1}\}$ for $g\leqslant i\leqslant f$. That is, $d_{f+1}$ prevents all of $E_a$ being played.
\end{proof}

The properties of the normalization algorithm that we require follow immediately from the algorithm steps.

\begin{lemma}\label{lem:algorithmproperties} Let $D=(d_1,d_2,\ldots,d_k)$ be a \textsc{domino shave} position and $D'=(d'_1,d'_2,\ldots,d'_k)$ be the normalized position.
\begin{enumerate}
\item The indices of the dominoes of $D$ are partitioned into subsets $E_1,E_2,\ldots,E_f$;
\item If $i\in E_a$, $j\in E_b$ and $a<b$ then the left and right spots of $d'_i$ are smaller than the left and right spots of $d'_j$.
\item Let $i<j$, $i\in E_a$ and $j\in E_b$. If $a<b$ then
$d'_j$ does not prevent $d'_i$ being played. If $a>b$ then $d'_j$ does prevent $d'_i$ being played.
\end{enumerate}
\end{lemma}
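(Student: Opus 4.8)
The plan is to read all three assertions directly off the bookkeeping done by the normalization algorithm, so that the real work reduces to three short observations. For Part 1, I would first establish termination. At the start of each pass the ``rightmost consecutive line of blue, green and red dominoes'' is nonempty (this is precisely the loop guard in step~3), and its rightmost domino has no un-recoloured domino to its right, so that domino is playable and hence $E_s\neq\emptyset$. Since step~2 recolours \emph{every} index of $E_s$, and a recoloured domino is never reconsidered, each pass strictly decreases the number of blue/green/red dominoes. Thus the algorithm halts after some $f\leqslant k$ passes, each index is recoloured during exactly one pass, and every index therefore lies in exactly one $E_s$; that is the claimed partition $E_1,\dots,E_f$.

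For Part 2 the key is to track the parameter $p$. It is initialized to $1$ and incremented by $2$ in step~3 before each new pass, so during pass $s$ we have $p=2s-1$, and step~2 writes only the values $p$ and $p+1$ into the spots of the dominoes of $E_s$. Consequently every spot of $d'_i$ with $i\in E_a$ lies in $\{2a-1,2a\}$. If $a<b$, then both spots of $d'_i$ are at most $2a$, while both spots of $d'_j$ with $j\in E_b$ are at least $2b-1\geqslant 2a+1$; hence both spots of $d'_i$ are strictly smaller than both spots of $d'_j$, as asserted.

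For Part 3 I would first record a uniform \emph{prevention criterion}. Writing $\mu_i=\min\{l'_i,r'_i\}$, the rules say that a right-neighbour $d'_j$ blocks a blue or green $d'_i$ exactly when $l'_i>\min\{l'_j,r'_j\}$, and blocks a red or green $d'_i$ exactly when $r'_i>\min\{l'_j,r'_j\}$; since the left spot of a blue/green domino and the right spot of a red/green domino each equal $\mu$, in all three colours $d'_j$ prevents $d'_i$ (for $i<j$) if and only if $\mu_i>\mu_j$. Combining this with Part~2 finishes the proof: when $a<b$ we get $\mu_i\leqslant 2a<2b-1\leqslant\mu_j$, so $\mu_i>\mu_j$ fails and $d'_j$ does not prevent $d'_i$; when $a>b$ the same spot-value inequality read the other way gives $\mu_j\leqslant 2b<2a-1\leqslant\mu_i$, so $\mu_i>\mu_j$ holds and $d'_j$ does prevent $d'_i$.

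None of these steps is deep, which is exactly why the authors say the properties ``follow immediately''. The only two points requiring genuine care are (i) checking $E_s\neq\emptyset$, so that the partition is finite and really exhausts all indices, and (ii) phrasing the prevention criterion uniformly across the three colours so that every comparison collapses to the single quantity $\mu_i=\min\{l'_i,r'_i\}$. Once those are in hand, the monotonicity of spot values from Part~2 forces Part~3 with no further computation.
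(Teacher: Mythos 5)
Your proof is correct and takes the only approach available: the paper gives no written proof at all, asserting that the properties ``follow immediately from the algorithm steps,'' and your argument is exactly the fleshing-out of that assertion --- termination plus the invariant $p=2s-1$ give parts 1 and 2, and the uniform prevention criterion $\mu_i=\min\{l_i,r_i\}$ (the same quantity the paper uses in the lemma immediately preceding this one) reduces part 3 to part 2. The only interpretive point worth recording is that ``can be played'' in step 2 must be judged within the rightmost un-recoloured run, ignoring the already-recoloured dominoes to its right --- the reading you adopt, and the one forced by the paper's worked example (where $E_2=\{4\}$ even though $(4,4)$ is blocked in the full line).
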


\begin{lemma} If $D$ is a \textsc{domino shave} position and $D'$  is its normalized version then $D=D'$.
\end{lemma}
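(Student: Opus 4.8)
The plan is to show that $D$ and $D'$ are not merely equal in value but have \emph{identical} game trees, from which $D=D'$ is immediate. First I would record the one feature of {\sc domino shave} that drives everything: writing $\mu_i=\min\{l_i,r_i\}$, a domino $d_i$ is playable (by whichever player its colour permits) inside a prefix $d_1,\ldots,d_m$ precisely when $\mu_i\le\mu_j$ for all $i\le j\le m$. Indeed, for blue or green $d_i$ one has $\mu_i=l_i$ and the Left condition $l_i\le l_j,\ l_i\le r_j$ is exactly $\mu_i\le\mu_j$, and symmetrically for red or green on the Right. Consequently every position reachable in play is a prefix $d_1,\ldots,d_m$, and the entire game tree is fixed by two data: the colour sequence (which fixes who owns each move) and, for each pair $i\le m$, the Boolean $R(i,m):=[\,\mu_i\le\mu_j\text{ for all }i\le j\le m\,]$ recording whether $d_i$ may be played in the length-$m$ prefix. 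It therefore suffices to prove that $D$ and $D'$ carry the same colours and the same relation $R$.

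The colours agree because the recolouring in step (3) returns each domino to its original colour class. For $R$, fix $i$ and note that $R(i,\cdot)$ is monotone: $R(i,m)$ holds iff $m\le M(i)$, where $M(i)$ is one less than the position of the \emph{first} $j>i$ with $\mu_j<\mu_i$ (and $M(i)=k$ if no such $j$ exists). The point I would stress is that it is \emph{not} the full pairwise ``prevention'' relation that must be matched---Lemma~\ref{lem:algorithmproperties}(3) shows that relation genuinely changes under normalization---but only the location $M(i)$ of the first preventer; this is all that survives because a move always deletes an entire suffix, so $d_i$ is only ever tested in a prefix that still contains every domino strictly between $i$ and the point of play. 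I would then compute $M(i)$ in both $D$ and $D'$ from the partition $E_1,\ldots,E_f$. If $i\in E_1$, then $d_i$ is a suffix-minimum of the whole line, so $M_D(i)=k$; and in $D'$ the block $E_1$ receives spots with $\mu'_i=1$, the global minimum, so $M_{D'}(i)=k$ as well. If $i\in E_a$ with $a>1$, write $E_a=\{g,\ldots,f\}$ (consecutive, by the lemma preceding Lemma~\ref{lem:algorithmproperties}). That same lemma gives $\mu_g\le\cdots\le\mu_f$ and that $d_{f+1}$ prevents every domino of $E_a$, i.e. $\mu_{f+1}<\mu_i$; hence in $D$ the first preventer of $d_i$ sits exactly at $f+1$ and $M_D(i)=f$. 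In $D'$ every domino of $E_a$ gets the common value $\mu'_i=2a-1$, strictly above the values of all earlier blocks and tied within the block, so again the first strictly smaller entry to the right of $i$ is at $f+1$ and $M_{D'}(i)=f$.

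With $M_D=M_{D'}$ we obtain $R_D=R_{D'}$, and together with equality of colours the identity map on prefix-lengths $\{0,1,\ldots,k\}$ becomes a Left/Right-preserving isomorphism of the two game trees; hence $D=D'$. I expect the only real obstacle to be this middle step, and specifically the conceptual point that one must \emph{not} try to preserve the pairwise prevention relation (which Lemma~\ref{lem:algorithmproperties}(3) reorganizes into clean block form and which truly differs between $D$ and $D'$), but instead isolate the first-preventer position $M(i)$ and pin it to $\max E_a+1$ using the consecutive-block structure and the lemma that $d_{f+1}$ prevents all of $E_a$. Everything else is bookkeeping.
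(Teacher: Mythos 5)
Your route is genuinely different from the paper's. The paper proves $D-D'=0$ by a Tweedledee--Tweedledum argument on the difference game: the second player always answers at the same index in the other component, and \cref{lem:algorithmproperties} is invoked to show that, throughout play, $d_i$ is playable if and only if $d'_i$ is playable. You instead prove the stronger statement that $D$ and $D'$ have \emph{identical} game trees, by observing that every reachable position is a prefix, that playability of $d_i$ in a prefix is exactly the suffix-minimum condition on $\mu_i=\min\{l_i,r_i\}$, and that the whole tree is therefore encoded by the colour sequence together with the first-preventer function $M(i)$. This reduction is correct, avoids any strategy analysis, and buys a cleaner (form-level rather than value-level) conclusion; both proofs ultimately rest on the same structural facts about the normalization.

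The gap is in the middle step, exactly where you locate the crux. You take from ``the lemma preceding \cref{lem:algorithmproperties}'' that $E_a=\{g,\dots,f\}$ is a consecutive block and that $\mu_g\le\cdots\le\mu_f$. That lemma's \emph{statement} asserts neither fact (it says only that $d_{f+1}$ prevents every domino of $E_a$); both appear only inside the paper's proof of it, and both are false in general. Take the all-green position $(1,1)(3,3)(2,2)(0,0)$: then $E_1=\{4\}$, and at stage $2$ the right-most uncoloured line is $d_1d_2d_3$, within which $d_1$ and $d_3$ are playable but $d_2$ is not (since $\mu_2=3>\mu_3=2$); so $E_2=\{1,3\}$ is not consecutive and the $\mu$-values $1,3,2$ on the enclosing interval are not monotone. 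Your derivation of ``no preventer of $d_i$ in $(i,f]$'' therefore does not stand as written, and in $D'$ your phrase ``tied within the block'' fails for indices of $(i,f]$ lying in later blocks (index $2$ above). Moreover you never argue that $d_{f+1}$ lies in an \emph{earlier} block, which is what you need in $D'$ to conclude $\mu'_{f+1}<2a-1$.

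All three facts you actually need are true, so the argument can be repaired, but the correct carrier is the \emph{run} (the right-most consecutive line of uncoloured dominoes processed at stage $a$), not $E_a$ itself. The run is an interval containing $i$ and $f$, so $(i,f]$ lies inside it, and membership $i\in E_a$ means $\mu_i\le\mu_j$ for every $j\ge i$ in the run; this gives ``no preventer in $(i,f]$'' in $D$, and in $D'$ every index of $(i,f]$ lies in $E_b$ with $b\ge a$, hence has normalized spot value $2b-1\ge 2a-1$. Next, since the last domino of a run is always playable within it, $f=\max E_a$ is the last index of the run, so $d_{f+1}$ was recoloured at an earlier stage, i.e.\ $f+1\in E_b$ with $b<a$; that yields the preventer at $f+1$ in $D'$, and the preceding lemma (whose statement, unlike its proof, is correct) yields it in $D$. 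With these substitutions your computation $M_D(i)=M_{D'}(i)=f$ closes, and the rest of your proof is sound.
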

\begin{proof} Let $\{d_i: i=1,2,\ldots, k\}$ be the dominoes in $D$ and $\{d'_i: i=1,2,\ldots, k\}$ the dominoes in $D'$. We show that $D-D'=0$.

The strategy will the usual mimic strategy: if the first player plays the domino with index $i$ in one of the two strings then the second player plays the other domino of index $i$. To prove this we need to show that at every stage of the game,  $d_i$ is playable  if and only if $d'_i$ is playable.\\

On the first move, the only dominoes playable in $D$ are those $d_i$, $i\in E_1$. By \cref{lem:algorithmproperties} ($3$), the dominoes $d'_i$, $i\in E_a$, $a>1$ are not playable.

Now consider the dominoes $d'_i, d'_j$,  $i,j\in E_1$, and $i<j$.  If there is a green
domino $d'_c$, $c\in E_1$, $i<c\leqslant j$  then both spots of  $d'_j$ are greater than those of $d'_i$. If there is no such domino then the spots of $d'_i$ and $d'_j$ are $p$ and $p+1$ for some $p$. The order depends on the domino colour.  Consequently,  $d'_j$ does not prevent the playing of $d'_i$. Therefore, for $i\in E_1$, both $d_i$ and $d'_i$ are playable.\\

Now suppose  $d_i$, $i\in E_a$, $a>1$, is playable. This is only possible if $d_j$, $j>i$, $j\in E_b$, and $b<a$ have been played or eliminated. By the mimic strategy played so far,
it is also true that $d'_j$, $j>i$, $j\in E_b$, and $b<a$ have been played or eliminated.  By \cref{lem:algorithmproperties} ($3$),  the dominoes $d'_i$, $i\in E_b$, $b>a$ are not playable. By \cref{lem:algorithmproperties} ($2$), the dominoes $d'_j$, $i<j$ and $i,j\in E_b$, do not prevent $d'_i$ from being played. Therefore $d'_i$ is playable.

Suppose  $d'_i$, $i\in E_a$, is playable. Again, the dominoes  $d'_j$, $j>i$, $j\in E_b$, and $b<a$ have been played or eliminated.
By the mimic strategy played so far,
it is also true that $d_j$, $j>i$, $j\in E_b$, and $b<a$ have been played or eliminated. However, by the normalization algorithm, once the dominoes
$d_j$, $j\in \cup_{f=1}^{a-1}$, and $i<j$ are gone then every domino, and specifically $d_i$,  with index in $E_a$ is playable.

This shows that the mimic strategy is possible and therefore $D-D'$ is a second player win.
\end{proof}



\subsection{{\sc domino shave} is {\sc clockwise hackenbush}}\label{sec:DShave}

 The proof of the equivalence between \textsc{domino shave} and  \textsc{clockwise hackenbush} is similar to that of  \textsc{domino shave} and normalized  \textsc{domino shave}.
\begin{theorem} There is a bijection, $f$,  between \textsc{domino shave} and  \textsc{clockwise hackenbush} positions such that $D-f(D) = 0$.
\end{theorem}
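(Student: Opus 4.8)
The plan is to construct an explicit bijection $f$ between \textsc{domino shave} positions and \textsc{clockwise hackenbush} positions and then verify $D - f(D) = 0$ by the standard Tweedledum-Tweedledee (mimic) strategy, exactly as was done for the normalization lemma. Since we already know $D = D'$ for the normalized position $D'$, it suffices to define $f$ on normalized positions and show $D' - f(D') = 0$; the general bijection is then $f$ precomposed with normalization. This reduces the problem to matching a \emph{normalized} line of dominoes against a \textsc{clockwise hackenbush} tree.

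First I would describe $f$ on a normalized position $D' = (d'_1, \ldots, d'_k)$. Recall from \cref{lem:algorithmproperties} that the indices partition into blocks $E_1, E_2, \ldots, E_f$, where the spots strictly increase across blocks and within a block consist of consecutive values $p, p+1$. The natural reading is that each block $E_a$ corresponds to one ``level'' of the \textsc{clockwise hackenbush} tree: the trunk edge is the domino at the block boundary, and a blue/red/green domino $(l_i, r_i)$ maps to a blue/red/green edge according to whether $l_i < r_i$, $l_i > r_i$, or $l_i = r_i$. The right-to-left ordering of the dominoes in $D'$ corresponds to reading the trunk of the tree from top to bottom (since the rightmost playable dominoes correspond to the topmost trunk edges, matching \cref{lem:numbers}). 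The key point to make precise is that the ``prevention'' conditions in the \textsc{domino shave} move rules translate exactly into the ``edge not connected to the ground gets removed'' rule in \textsc{clockwise hackenbush}: a domino being playable should be equivalent to the corresponding edge being on the current trunk.

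Once $f$ is defined, I would prove $D' - f(D') = 0$ by showing that at every stage of play, domino $d'_i$ is playable for a given player if and only if the corresponding edge $e_i$ is a legal move for that same player in $f(D')$, and that playing $d'_i$ removes the same set of indexed objects as playing $e_i$. Granting this index-by-index correspondence, the second player copies the first player's move in the opposite component, always reaching a position of the form (remaining dominoes) $-$ (corresponding remaining tree), so the second player makes the last move and wins. The color condition (Left may play blue or green, Right may play red or green) matches on both sides by construction of $f$.

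The main obstacle will be the careful verification that playability is preserved throughout the game, not just at the start --- in particular, that when a trunk edge is deleted and a previously non-trunk branch descends to become the new trunk, the corresponding dominoes (those in the block that was ``shielded'' by the removed domino) simultaneously become playable, and conversely. This is the analogue of the argument at the end of the previous proof, where removing the dominoes of higher index in blocks $E_1, \ldots, E_{a-1}$ unlocks all of $E_a$; the content is \cref{lem:algorithmproperties}(3), which says the blocks shield each other in exactly the nested way that trunk edges shield branches in a tree. I expect the bulk of the work is bookkeeping to confirm that the block structure of the normalized line is precisely the level structure of the trunk decomposition from \cref{thm:OSDecomp}, after which the mimic strategy goes through verbatim.
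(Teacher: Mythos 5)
Your proposal follows essentially the same route as the paper: reduce to normalized positions, map each block $E_a$ to a coloured string attached at the edge corresponding to the domino just past the block boundary (the paper makes this precise by inducting on the number of blocks, gluing the string for $E_a$ to the bottom vertex of, and to the left of, the edge $e_{j+1}$ with $j=\max E_a$), and then verify $D-f(D)=0$ by the index-by-index mimic strategy, with \cref{lem:algorithmproperties} ($3$) supplying the playability correspondence at every stage, including the moment a shielded branch descends onto the trunk. The paper's proof is your plan with that bookkeeping carried out, so the approaches coincide.
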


\begin{proof}
Let $D=(d_1,d_2,\ldots,d_k)$ be a normalized  \textsc{domino shave} position. Let $E_a$ be the index set of the last line of dominoes replaced in the normalization algorithm. We induct on $a$.

Suppose $a=1$. Let $T=(e_1,e_2,\ldots,e_k)$ be a string where $e_i$ is the same colour as $d_i$. Every domino in $D$ is playable and remains playable until it is eliminated. Similarly, $T$ is a trunk so every edge is playable and remains playable until it is removed.

Suppose $a>1$. Consider $D'=D\setminus \{d_i:i\in E_a\}$. Now $D'$ is a normalized  \textsc{domino shave} position and by induction, there exists a
unique \textsc{clockwise hackenbush} $T'$ with $f(D')=T'$. Also, $D''=(d_i:i\in E_a)$ is equivalent to a string $T''$. Let $j$ be the greatest index, in $E_a$
and let $e_{j+1}$ be the edge of $T'$ which corresponds to $d_{j+1}$.
Create a new tree, $T$, by identifying the bottom vertex of $T''$ and the bottom vertex of $e_{j+1}$. Place $T''$ to the left of the edge $e_{j+1}$. Set $f(D) = T$. Note that every edge of $T$ is associated with a domino, specifically, $d_i\leftrightarrow e_i$.\\

\textit{Claim:} $D-T=0$.

\textit{Proof of Claim.} This follows in a similar fashion the previous equivalence result. The mimic strategy is to play the corresponding other object of the same index.

If $a=1$ then all edges and dominoes are playable and remain playable until eliminated.

If $i\not\in E_a$ then both the dominoes in $D''$ and the edges of $T''$ do not prevent  $d_i$ and $e_i$ from being played.

Suppose $i\in E_a$. If $d_i$ is playable then $d_{j+1}$ has been eliminated. In $T$, therefore, $e_{j+1}$ has also been eliminated. The string $T''$ is now part of the trunk and every edge, including $e_i$ is playable. If $e_i$ is playable then it is on the trunk and $e_{j+1}$ has been eliminated. Therefore, $d_{j+1}$ has been eliminated and
every domino in $D''$, including $d_i$ is playable.\\

This proves the Claim and the equivalence.
 \end{proof}

From a \textsc{clockwise hackenbush} it is possible to get the normalized \textsc{domino shave} position by realizing the first trunk corresponds to the dominoes in $E_1$ and the next strings to the left, in order, correspond to the dominoes of $E_2, E_3,\ldots, E_n$ . The normalization algorithm then gives a set of dominoes.
\\

\subsection{Relationship with other games}

Versions of \textsc{domino shave} include, as special cases, several other rulesets each of which has been shown to have interesting or intriguing properties.
\begin{enumerate}
\item If all the dominoes  are $(1,1)$ then the \textsc{clockwise hackenbush} version is a single string of green edges.  This is {\sc nim}, which  is the foundation of all impartial games \cite{Bouto1902}.
\item If all the pieces are of the form $(a,a)$ then this is {\sc stirling shave} \cite{Fisher}. An explicit formula for evaluating the ordinal sums of nimbers is developed to give the values of the positions. If the dominoes are a permutation of the dominoes $(1,1),(2,2),\ldots,(n,n)$ then the number of $\P$-positions of length $n$ is given in terms of the Stirling numbers of the second kind.
\item Hetyei \cite{Hetye2009} did not give the game a name. The domino $d_i$ is restricted to having both spots between $1$ and $i$.
Only the right spot is used to determine when a domino can be removed thus it is an impartial game.
The number of $\P$-positions with $n$ dominoes is given in terms of the Bernoulli numbers of the second kind. The game can be shown to be equivalent to
\textsc{stirling shave} via the following. A domino is unplayable if it can never be the first of the string to be removed. A blue domino is unplayable
since the right stop is greater than the left. If the dominoes $d_{i+1},d_{i+2},\ldots, d_j$ are unplayable
and $d_{i}$ is prevented from being played by $d_f$, $i+1\leqslant f\leqslant j$ then $d_i$ is unplayable. Removing all unplayable vertices
does not affect the options of all followers in the game. Remaining are a subset of the red and green dominoes all of which have their right spots no larger than their left spots.
 If $d_i$ is prevented from being played by $d_j$ then, in particular, $r_i>r_j$. Thus when each domino $(l,r)$ is replaced by $(r,r)$, the same dominoes can be played and the dominoes that prevent a domino from being played is the same in both games.  \item If all the pieces are $(1,2)$ and $(2,1)$ then it is equivalent to  \textsc{blue-red hackenbush} strings.
\item If all the pieces are $(1,2)$, $(2,1)$ and $(1,1)$ then this is  \textsc{blue-red-green hackenbush} strings. The value can be given via the ordinal sums of numbers and nimbers. No one has given an explicit formula. It seems clear that the values of the strings are unique but we do not know of a proof.
\end{enumerate}

\section*{Acknowledgments}
Alda Carvalho is a CEMAPRE member and has the support of
Project CEMAPRE - UID/MULTI/00491/2019 financed by FCT/MCTES through national funds.\\

\noindent
Melissa A. Huggan was supported by the Natural Sciences and Engineering Research Council of Canada (funding reference number PDF-532564-2019).\\

\noindent
Richard J. Nowakowski was supported by the Natural Sciences and Engineering Research Council of Canada (funding reference number 4139-2014).\\

\noindent
Carlos Santos is a CEAFEL member and has the support of
UID/MAT/04721/2019 strategic project.\\

\end{document}